\newlength{\tabwidth}
\newlength{\tabheight}
\newlength{\tabrule}
\newlength{\tabwidthx}
\newlength{\tabheightx}
\def\gentabbox#1#2#3#4{\vbox to \tabheight{\setlength{\tabrule}{#3}%
  \setlength{\tabwidthx}{#1\tabwidth}\addtolength{\tabwidthx}{\tabrule}%

\setlength{\tabheightx}{#2\tabheight}\addtolength{\tabheightx}{-\tabheight}%
  \hbox to #1\tabwidth{%
    \hspace{-0.5\tabrule}\rule{\tabrule}{#2\tabheight}\hspace{-\tabrule}%
    \vbox to #2\tabheight{\hsize=\tabwidthx%
      \vspace{-0.5\tabrule}\hrule width\tabwidthx height\tabrule%
      \vspace{-0.5\tabrule}\vfil%
      \hbox to \tabwidthx{\hss#4\hss}%
        \vfil\vspace{-0.5\tabrule}%
      \hrule width\tabwidthx height\tabrule\vspace{-0.5\tabrule}}%
    \hspace{-\tabrule}\rule{\tabrule}{#2\tabheight}\hspace{-0.5\tabrule}}%
  \vspace{-\tabheightx}}}
\def\genblankbox#1#2{\vbox to \tabheight{\vfil\hbox to
#1\tabwidth{\hfil}}}
\def\tabbox#1#2#3{\gentabbox{#1}{#2}{0.4pt}{\strut #3}}
\newcommand{\field}{\mathbb}
\newcommand{\liealgebra}{\mathfrak}
\newcommand{\la}{\liealgebra}
\newcommand{\C}{{\field C}}
\newcommand{\Z}{{\field Z}}
\newcommand{\Q}{{\field Q}}
\newcommand{\PP}{{\field P}}
\renewcommand{\b}{\liealgebra b}
\newcommand{\n}{{\la n}}
\newcommand{\ga}{\alpha}
\newtheorem{prop}{Proposition}[section]
\newtheorem{theorem}[prop]{Theorem}
\newtheorem{corollary}[prop]{Corollary}
\newtheorem{proposition}[prop]{Proposition}
\theoremstyle{definition}
\newtheorem{remark}[prop]{Remark}
\newtheorem{example}[prop]{Example}
\newtheorem{definition}[prop]{Definition}
\newcommand{\frS}{\mathfrak{S}}
\newcommand{\frs}{\mathfrak{s}}
\newcommand{\frt}{\mathfrak{t}}
\newcommand{\caO}{\mathcal{O}}
\newcommand{\xx}{\mathbf x}
\newcommand{\yy}{\mathbf y}
\newcommand{\zz}{\mathbf z}
\newcommand{\mb}{\mathbf }
\begin{document}
\title[Wonderful symmetric varieties and Schubert polynomials]
{Wonderful symmetric varieties and Schubert polynomials}

\author{Mahir Bilen Can}
\address{Department of Mathematics, Tulane University, New Orleans LA 70118 USA}
\email{mcan@tulane.edu}

\author{Michael Joyce}
\address{Department of Mathematics, Tulane University, New Orleans LA 70118 USA}
\email{mjoyce3@tulane.edu}

\author{Benjamin Wyser}
\address{Department of Mathematics, University of Illinois at Urbana-Champaign, Urbana IL 61801 USA}
\email{bwyser@illinois.edu}

\date{\today}

\subjclass{14M27, 05E05, 14M15}

\keywords{symmetric varieties, Schubert polynomials, wonderful compactification, equivariant cohomology, weak order, parabolic induction}

\begin{abstract}
Extending results of \cite{Wyser-13-TG}, we determine formulas for the equivariant cohomology classes of closed orbits of certain families of spherical subgroups of $GL_n$ on the flag  variety $GL_n/B$.  Putting this together with a slight extension of the results of \cite{Can-Joyce-Wyser-15}, we arrive at a family of polynomial identities which show that certain explicit sums of Schubert polynomials factor as products of linear forms.
\end{abstract}
\maketitle

\section{Introduction}
Suppose that $G$ is a connected reductive algebraic group over $\C$. Suppose that $B \supseteq T$ are a Borel subgroup and a maximal torus of $G$, respectively, $W$ is the Weyl group, and let $\mathfrak{t}$ denote the Lie algebra of $T$. By a classical theorem of Borel \cite{Borel-53}, the cohomology ring of $G/B$ with rational coefficients is isomorphic to the coinvariant algebra $\mathbb{Q}[\mathfrak{t}^*]/I^W$, where $I^W$ denotes the ideal generated by homogeneous $W$-invariant polynomials of positive degree. Any subvariety $Y$ of $G/B$ defines a cohomology class $[Y]$ in $H^*(G/B)$.  It is then natural to ask for a polynomial in $\mathbb{Q}[\mathfrak{t}^*]$ which represents $[Y]$.  In this paper, for certain families of subvarieties of certain $G/B$, we approach and answer this question in two different ways.  Relating the two answers leads in the end to our main result, Theorem \ref{thm:sum-equals-product}, which, roughly stated, says that certain non-negative linear combinations of Schubert polynomials factor completely into linear forms.

Our group of primary interest is $\mb{G}= \mb{GL}_n$,  with $\mb{B}$ its Borel subgroup of lower-triangular matrices, and $\mb{T}$ its maximal torus of diagonal matrices. In this case, there is a canonical basis $x_1, \dots, x_n$ of $\mathfrak{\mathbf{t}}^*$ that correspond to the Chern classes of the tautological quotient line bundles on the variety of complete flags $\mb{G}/\mb{B}$. Let $\mb{Z}_n$ denote the center of $\mb{GL}_n$, consisting of diagonal scalar matrices.  Let $\mb{O}_n$ denote the orthogonal subgroup of $\mb{GL}_n$, and let $\mb{Sp}_{2n}$ denote the symplectic subgroup of $\mb{GL}_{2n}$.  Denote by $\mb{GO}_n$ (resp. $\mb{GSp}_{2n}$) the central extension $\mb{Z}_n \mb{O}_n$ (resp. $\mb{Z}_{2n} \mb{Sp}_{2n}$). For any ordered sequence of positive integers $\mu=(\mu_1,\hdots,\mu_s)$ that sum to $n$,
$\mb{GL}_n$ has a Levi subgroup $\mb{L}_{\mu} := \mb{GL}_{\mu_1} \times \hdots \times \mb{GL}_{\mu_s}$, as well as a parabolic subgroup $\mb{P}_{\mu}=\mb{L}_{\mu} \ltimes \mb{U}_{\mu}$ containing $\mb{B}$, where $\mb{U}_{\mu}$ denotes the unipotent radical of $\mb{P}_{\mu}$.

The subgroup
$$\mb{H}_{\mu}:=(\mb{GO}_{\mu_1} \times \hdots \times \mb{GO}_{\mu_s} ) \ltimes \mb{U}_{\mu}$$
of $\mb{GL}_n$ is \textit{spherical}, meaning that it acts on $\mb{GL}_n/\mb{B}$ with finitely many orbits.  Moreover, there is a unique \textit{closed} $\mb{H}_{\mu}$-orbit $\mb{Y}_{\mu}$ on $\mb{GL}_n/\mb{B}$, which is our object of primary interest.

The reason for our interest in this family of orbits is that they correspond to the closed $\mb{B}$-orbits on the various $\mb{G}$-orbits of the \textbf{wonderful compactification} of the homogeneous space $\mb{GL}_n/\mb{GO}_n$.  This homogeneous space is affine and symmetric, and it is classically known as the space of smooth quadrics in $\PP^{n-1}$.  Its wonderful compactification, classically known as the variety of complete quadrics \cite{Semple-48, Laksov-85}, is a $G$-equivariant projective embedding $\mb{X}$ which contains it as an open, dense $\mb{G}$-orbit, and whose boundary has particularly nice properties. (We recall the definition of the wonderful compactification in Section \ref{sec:bg-wc}.)

It turns out that, with minor modifications, our techniques apply also to the wonderful compactification $\mb{X}'$ of the space $\mb{GL}_{2n}/\mb{GSp}_{2n}$, which parameterizes non-degenerate skew-symmetric bilinear forms on $\C^{2n}$, up to scalar.
Letting $\mb{G} = \mb{GL}_{2n}$ in this case, the $\mb{G}$-orbits on $\mb{X'}$ are again parametrized by compositions $\mu=(\mu_1,\hdots,\mu_s)$ of $n$; note that this is of course equivalent to parametrizing them by compositions of $2n$ with each part being even.  Each $\mb{G}$-orbit has the form $\mb{G}/\mb{H}_{\mu}'$, with
$$\mb{H}_{\mu}':=(\mb{GSp}_{2\mu_1} \times \hdots \times \mb{GSp}_{2\mu_s} ) \ltimes \mb{U}_{\mu},$$
a spherical subgroup which again acts on $\mb{GL}_{2n} / \mb{B}$ with a unique closed orbit $\mb{Y}_{\mu}'$.

Let us consider two ways in which one might try to compute a polynomial representative of $[\mb{Y}_{\mu}]$ (or $[\mb{Y}_{\mu}']$). For the first, note that $\mb{Y}_{\mu}$, being an orbit of $\mb{H}_{\mu}$, also admits an action of a maximal torus $\mb{S}_{\mu}$ of $\mb{H}_{\mu}$.  Thus $\mb{Y}_{\mu}$ admits a class $[\mb{Y}_{\mu}]_{\mb{S}_{\mu}}$ in the $\mb{S}_{\mu}$-equivariant cohomology of $\mb{GL}_n/\mb{B}$, denoted by $H_{\mb{S}_{\mu}}^*(\mb{GL}_n/\mb{B})$.  In brief, this is a cohomology theory which is sensitive to the geometry of the $\mb{S}_{\mu}$-action on $\mb{GL}_n/\mb{B}$.  It admits a similar Borel-type presentation, this time as a polynomial ring in \textit{two} sets of variables (the usual set of $\xx$-variables referred to in the second paragraph, along with a second set which consists of $\yy$ and $\zz$-variables) modulo an ideal.  Moreover, the map $H_{\mb{S}_{\mu}}^*(\mb{GL}_n/\mb{B}) \rightarrow H^*(\mb{GL}_n/\mb{B})$ which sets all of the $\yy$ and $\zz$-variables to $0$ sends the equivariant class of any $\mb{S}_{\mu}$-invariant subvariety of $\mb{GL}_n/\mb{B}$ to its ordinary (non-equivariant) class.  Thus if a polynomial representative of $[\mb{Y}_{\mu}]_{\mb{S}_{\mu}}$ can be computed, one obtains a polynomial representative of $[\mb{Y}_{\mu}]$ by specializing $\yy,\zz \mapsto 0$.

In \cite{Wyser-13-TG}, this problem is solved for the case in which $\mu$ has only one part, in which case $\mb{H}_{\mu} = \mb{GO}_n$. Here, we extend the results of \cite{Wyser-13-TG} to give a formula for the equivariant class $[\mb{Y}_{\mu}]_{\mb{S}_{\mu}}$ (and $[\mb{Y}_{\mu}^\prime]_{\mb{S}_{\mu}^\prime}$) for an arbitrary composition $\mu$.  The main general result is Proposition \ref{prop:eqvt-cohomology-formula-general}; it, together with Proposition \ref{prop:formula-for-chern-class}, imply the case-specific equivariant formulas given in Corollaries \ref{cor:orthogonal-formula-eqvt} and \ref{cor:symplectic-formula-eqvt}.

The formulas for $[\mb{Y}_{\mu}]$ and $[\mb{Y}_{\mu}']$ obtained from these corollaries (by specializing $\yy$ and $\zz$-variables to $0$) are as follows:

\begin{corollary}\label{cor:orthogonal-formula-ordinary}
The ordinary cohomology class of $[\mb{Y}_{\mu}]$ is represented in $H^*(\mb{G}/\mb{B})$ by the formula
\[ 2^{d(\mu)}  \left( \displaystyle\prod_{i=1}^n x_i^{R(\mu,i) + \delta(\mu,i)}  \right) \displaystyle\prod_{i=1}^s \displaystyle\prod_{\nu_i+1 \leq j \leq k \leq \nu_{i+1} - j} (x_j+x_k). \]
\end{corollary}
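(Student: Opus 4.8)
The plan is to read this formula off directly from the equivariant one. By Corollary~\ref{cor:orthogonal-formula-eqvt} we have an explicit polynomial in $\field Z[\xx,\yy,\zz]$ representing the equivariant class $[Y_\mu]_{S_\mu}\in H_{S_\mu}^*(GL_n/B)$, and, as recalled in the introduction, the ring homomorphism $H_{S_\mu}^*(GL_n/B)\to H^*(GL_n/B)$ sending every $\yy$- and $\zz$-variable to $0$ carries $[Y_\mu]_{S_\mu}$ to the ordinary class $[Y_\mu]$. Hence a polynomial representative of $[Y_\mu]$ is obtained by substituting $\yy,\zz\mapsto 0$ in the representative of Corollary~\ref{cor:orthogonal-formula-eqvt}, and the proof consists essentially of performing this substitution and simplifying.

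Concretely, the first step is to present the equivariant representative as the scalar $2^{d(\mu)}$ times a product of linear forms, and to classify those forms into three types: pure monomials $x_i$; forms $x_i\pm w$ with $w$ a $\yy$- or $\zz$-variable; and forms $x_j+x_k$. Since the substitution is a ring homomorphism, it suffices to evaluate it on each factor: it fixes the scalar $2^{d(\mu)}$, it sends a form of either of the first two types to the monomial $x_i$, and it leaves every form $x_j+x_k$ unchanged (and one notes that no factor of the equivariant formula is a pure $\yy$/$\zz$-form, consistent with $[Y_\mu]\neq 0$). Reassembling, the image is $2^{d(\mu)}$ times the product of the surviving quadratic factors and a monomial $\prod_i x_i^{e_i}$, where $e_i$ is the number of linear forms of the first two types whose leading variable is $x_i$.

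The only remaining work is combinatorial bookkeeping: one checks that the surviving factors $x_j+x_k$ are indexed precisely by the pairs $(j,k)$ with $\nu_i+1\le j\le k\le \nu_{i+1}-j$ as $i$ runs from $1$ to $s$, and that $e_i=R(\mu,i)+\delta(\mu,i)$, the two summands recording respectively the genuinely equivariant factors $x_i\pm w$ and the pure monomials occurring in the equivariant formula. This amounts to a direct comparison of the explicit shape of the formula in Corollary~\ref{cor:orthogonal-formula-eqvt} with the definitions of $d(\mu)$, $R(\mu,i)$, $\delta(\mu,i)$ and the partial sums $\nu_i$; since the equivariant formula is assembled from the one-part case of \cite{Wyser-13-TG} together with the contributions coming from the parabolic $P_\mu$, the verification reduces to the single-block count plus an index shift by $\nu_i$ in the $i$-th block. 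I expect this index-chasing, rather than any conceptual difficulty, to be the only obstacle.
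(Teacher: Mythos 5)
Your proposal is correct and is essentially the paper's own argument: the paper likewise obtains Corollary \ref{cor:orthogonal-formula-ordinary} by setting $\yy=\zz=0$ in the representative of Corollary \ref{cor:orthogonal-formula-eqvt}, with the binomial factors coming from the $g_i$, the $x_i^{\delta(\mu,i)}$ from the $f_i$, and the $x_i^{R(\mu,i)}$ from the $h_{i,j}$ (each $x$-variable in block $i$ appearing in exactly $\mu_j$ factors of $h_{i,j}$ for each $j>i$). The only difference is cosmetic bookkeeping of which specialized factors are attributed to $\delta$ versus $R$.
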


\begin{corollary}\label{cor:symplectic-formula-ordinary}
The ordinary cohomology class of $[\mb{Y}_{\mu}']$ is represented in $H^*(\mb{G}/\mb{B})$ by the formula
\[ \left( \displaystyle\prod_{i=1}^{2n} x_i^{R(\mu,i)} \right) \displaystyle\prod_{i=1}^s  \displaystyle\prod_{\nu_i+1 \leq j < k \leq \nu_{i+1} - j} (\xx_j+\xx_k).�\]
\end{corollary}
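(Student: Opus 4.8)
The plan is to derive Corollary~\ref{cor:symplectic-formula-ordinary} (and, in parallel, Corollary~\ref{cor:orthogonal-formula-ordinary}) from the equivariant formulas of Corollaries~\ref{cor:symplectic-formula-eqvt} and~\ref{cor:orthogonal-formula-eqvt} by applying the specialization homomorphism described in the introduction. Recall that the restriction map $H^*_{S'_\mu}(GL_{2n}/B) \to H^*(GL_{2n}/B)$ obtained by setting every $\yy$- and $\zz$-variable to $0$ carries the equivariant class $[Y'_\mu]_{S'_\mu}$ to the ordinary class $[Y'_\mu]$. Hence it suffices to take the polynomial representative of $[Y'_\mu]_{S'_\mu}$ furnished by Corollary~\ref{cor:symplectic-formula-eqvt}, substitute $\yy, \zz \mapsto 0$, and simplify.

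First I would record the shape of the equivariant representative. By Proposition~\ref{prop:eqvt-cohomology-formula-general} together with Proposition~\ref{prop:formula-for-chern-class}, it factors as a monomial part --- the image of an equivariant Chern class of a conormal-type bundle of the closed orbit --- times a product of linear forms, each of which is, up to an additive equivariant correction, either a single variable $x_i$ or a sum $x_j + x_k$. These forms are naturally indexed block by block: the $i$-th block $GSp_{\mu_i}$ (respectively $GO_{\mu_i}$) contributes the forms attached to the flag coordinates it governs, namely those with index in the range $\nu_i + 1, \dots, \nu_{i+1}$. Setting $\yy, \zz \mapsto 0$ annihilates every additive correction, so a factor of the form $x_i + (\text{equivariant term})$ becomes $x_i$ and a factor $x_j + x_k + (\text{equivariant term})$ becomes $x_j + x_k$. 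One must check that no such factor vanishes identically under the substitution, so that no surviving linear form is accidentally lost, and that the monomial part specializes to a genuine monomial in the $x_i$; both are read off from the explicit descriptions in Propositions~\ref{prop:eqvt-cohomology-formula-general} and~\ref{prop:formula-for-chern-class}.

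Next I would carry out the bookkeeping of exponents. After specialization the total power of a given $x_i$ is the sum of the contribution of the Chern-class monomial and the contributions of those linear forms that collapse to a single variable $x_i$; reorganizing these sums by the index $i$ across all $s$ blocks is precisely the combinatorial content of the exponent $R(\mu, i)$. The discrepancy between the symplectic and orthogonal cases is then isolated: for $GO_{\mu_i}$ the isotropic-flag combinatorics can produce a pair with $j = k$, whose form is $x_j + x_j = 2x_j$, and gathering these across the blocks is exactly what yields both the global factor $2^{d(\mu)}$ and the additional shift $\delta(\mu, i)$ in the exponent of $x_i$; for $GSp_{\mu_i}$ the defining form is alternating, so no such pair occurs, the product over each block runs over strictly increasing pairs $\nu_i + 1 \le j < k \le \nu_{i+1} - j$, there is no power of $2$, and there is no $\delta$-shift. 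Assembling the monomial part with the block-wise products of the surviving linear forms then reproduces the displayed expression.

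The step I expect to be the main obstacle is this combinatorial identification: matching the abstract product over roots or weights produced by Proposition~\ref{prop:eqvt-cohomology-formula-general} with the concrete index set $\{(j,k) : \nu_i + 1 \le j < k \le \nu_{i+1} - j\}$, and verifying that the various exponent contributions really do aggregate to $R(\mu, i)$ in the symplectic case and to $R(\mu, i) + \delta(\mu, i)$ in the orthogonal case. In particular one has to track which linear forms degenerate, confirm that no surviving form is lost under $\yy, \zz \mapsto 0$, and handle the parity-dependent appearance of the pair with $j = k$ in the orthogonal case. Once this indexing is pinned down, the specialization itself is immediate and both formulas drop out.
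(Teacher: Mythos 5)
Your proposal is correct and follows essentially the same route as the paper: the authors also obtain Corollary \ref{cor:symplectic-formula-ordinary} by specializing $\yy,\zz\mapsto 0$ in the equivariant representative of Corollary \ref{cor:symplectic-formula-eqvt}, with the binomial factors $x_j+x_k$ coming from the $g_i$, the monomial $x_i^{R(\mu,i)}$ coming from the Chern-class factor $h_{\mu}$ (each $x$-variable in block $i$ lying in exactly $\mu_j$ linear forms of $h_{i,j}$ for $j>i$), and the $2^{d(\mu)}$ and $\delta(\mu,i)$ corrections appearing only in the orthogonal case, exactly as you describe. The bookkeeping you flag as the ``main obstacle'' is precisely the short verification the paper carries out.
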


The notations $\nu_i$, $d(\mu)$, $R(\mu,i)$, $\delta(\mu,i)$, etc. will be defined in Sections \ref{sec:background} and \ref{sec:eq-cohomology}.  For now, note that the representatives we obtain are factored completely into linear forms. In fact, the formulas reflect the semi-direct decomposition of $\mb{H}_{\mu}$ (resp., $\mb{H}_{\mu}'$) as we will detail in Section \ref{sec:eq-cohomology}.

A second possible way to approach the problem of computing $[\mb{Y}_{\mu}]$ is to write it as a non-negative integral linear combination of Schubert classes.  For each Weyl group element $w$ in the symmetric group $\mb{W} = S_n$, there is a \textbf{Schubert class} $[\mb{X}_w]$, the class of the \textbf{Schubert variety} $\mb{X}_w = \overline{\mb{B^+}w\mb{B}/\mb{B}}$ in $\mb{GL_n}/\mb{B}$, where $\mb{B^+}$ denotes the Borel subgroup of upper-triangular elements of $\mb{GL}_n$.  The Schubert classes form a $\mathbb{Z}$-basis for $H^*(\mb{GL}_n/\mb{B})$.

Assuming that one is able to compute the coefficients in
\begin{equation}\label{eq:schubert-sum}
[\mb{Y}_{\mu}] = \displaystyle\sum_{w \in \mb{W}} c_w [\mb{X}_w],
\end{equation}
then one may replace the Schubert classes in the above sum with the corresponding \textbf{Schubert polynomials} to obtain a polynomial in the $\xx$-variables representing $[\mb{Y}_{\mu}]$.  The Schubert polynomials $\frS_w$ are defined
recursively by first explicitly setting
$$
\frS_{w_0}:= x_1^{n-1} x_2^{n-2} \hdots x_{n-2}^2 x_{n-1},
$$
and then declaring that
$\frS_w = \partial_i \frS_{ws_i}$ if $ws_i < w$ in Bruhat order.  Here, $s_i = (i,i+1)$ represents the $i$th simple reflection, and $\partial_i$ represents the \textbf{divided difference operator} defined by
\[ \partial_i(f)(x_1,\hdots,x_n) = \dfrac{f(x_1,\hdots,x_n) - f(x_1,\hdots,x_{i-1},x_{i+1},x_i,x_{i+2},\hdots,x_n)}{x_i-x_{i+1}}. \]
It is well-known that $\frS_w$ represents $[\mb{X}_w]$ \cite{LS-82}, and so if \eqref{eq:schubert-sum} can be computed, $[\mb{Y}_{\mu}]$ is represented by the polynomial $\displaystyle\sum_{w \in \mb{W}} c_w \frS_w$.

In fact, a theorem due to M. Brion \cite{Brion-98} tells us in principle how to compute the sum \eqref{eq:schubert-sum} in terms of certain combinatorial objects. More precisely, to the variety $\mb{Y}_{\mu}$ there is an associated subset of $\mb{W}$, which we call the \textbf{$W$-set} of $\mb{Y}_{\mu}$, and denote by $W(\mb{Y}_{\mu})$.  For each $w \in W(\mb{Y}_{\mu})$, there is also an associated weight.  In fact, this weight is always a power of $2$, and the aforementioned theorem of Brion says that the sum \eqref{eq:schubert-sum} can be computed as
\begin{equation}\label{eq:brion-formula}
[\mb{Y}_{\mu}] = \displaystyle\sum_{w \in W(\mb{Y}_{\mu})} 2^{d(\mb{Y}_{\mu},w)} [\mb{X}_w]
\end{equation}

for non-negative integers $d(\mb{Y}_{\mu},w)$.  This can be turned into an explicit polynomial representative via the aforementioned Schubert polynomial recipe, assuming that one can compute the sets $W(\mb{Y}_{\mu})$, and the corresponding exponents $d(\mb{Y}_{\mu},w)$ explicitly.  In fact, in Section \ref{sec:bg-w-sets}, we recall explicit descriptions of the $W$-sets $W(\mb{Y}_{\mu})$ which have already been given in \cite{Can-Joyce-Wyser-15,Can-Joyce}, slightly extending those results to also give an explicit description of $W(\mb{Y}_{\mu}')$ for arbitrary $\mu$.  (Previous results of \cite{Can-Joyce-Wyser-15} only described $W(\mb{Y}_{\mu}')$ when $\mu$ consisted of a single part.)  And as we will note, the exponents $d(\mb{Y}_{\mu},w)$ are straightforward to compute.

This gives a second answer to our question, but note that it comes in a different form.  Indeed, the formulas of Corollaries \ref{cor:orthogonal-formula-ordinary} and \ref{cor:symplectic-formula-ordinary} are products of linear forms in the $\xx$-variables which are not obviously equal to the corresponding weighted sums of Schubert polynomials.  Of course, it is \emph{a priori} possible that the two polynomial representatives are actually \textit{not} equal, but simply differ by an element of $I^{\mb{W}}$, the ideal defining the Borel model of $H^*(\mb{G}/\mb{B})$.  However, our main result, Theorem \ref{thm:sum-equals-product}, states that in fact the apparent identity in $H^*(\mb{G}/\mb{B})$ is an equality \textit{of polynomials}.

The paper is organized as follows.  Section \ref{sec:background} is devoted mostly to recalling various background and preliminaries: We start by recalling necessary background on the wonderful compactification in Section \ref{sec:bg-wc}. We then give the explicit details of the examples which we are concerned with in Section \ref{sec:bg-examples}; this includes our conventions and notations regarding compositions, as well as our particular realizations of all groups, including the groups $\mb{H}_{\mu}$ and $\mb{H}_{\mu}'$.  In Section \ref{sec:bg-w-sets}, we review the notion of weak order and $W$-sets.  We recall results of \cite{Can-Joyce, Can-Joyce-Wyser-15} which are relevant to the current work, giving a slight extension of those results to the case of $W(\mb{Y}_{\mu}')$ for arbitrary $\mu$.

In Section \ref{sec:eq-cohomology}, we briefly review the necessary details of equivariant cohomology and the localization theorem.  We then use those facts to extend the formulas of \cite{Wyser-13-TG} to the more general cases of this paper, obtaining Proposition \ref{prop:eqvt-cohomology-formula-general} in a general setting, and its case-specific Corollaries \ref{cor:orthogonal-formula-eqvt} and \ref{cor:symplectic-formula-eqvt}. Corollaries \ref{cor:orthogonal-formula-ordinary} and \ref{cor:symplectic-formula-ordinary} are immediate consequences of these.

Finally, in Section \ref{sec:schubert-polys}, we compare the representatives of $[\mb{Y}_{\mu}]$ and $[\mb{Y}_{\mu}']$ obtained via our two different approaches, obtaining Theorem \ref{thm:sum-equals-product}.

\section{Background, notation, and conventions}\label{sec:background}

Throughout the text, we use italicized notation when we give arguments that apply to general reductive groups. In that case, $G$ is an arbitrary connected, reductive algebraic group defined over $\C$. We fix a Borel subgroup $B$ of $G$ and a maximal torus $T$ of $G$ contained in $B$. We let $W$ denote the Weyl group of $G$ and for every simple root $\alpha$ of $T$, we let $s_{\alpha} \in W$ denote the associated simple reflection and $P_{\alpha} = B \cup B s_{\alpha} B$ denote the minimal parabolic subgroup containing $B$ associated to $\alpha$.

By contrast, we use bolded notation to denote our two main examples $\mb{G} / \mb{H} = \mb{GL}_n / \mb{O}_n$ and $\mb{G} / \mb{H} = \mb{GL}_{2n} / \mb{Sp}_{2n}$. In these examples, we use the Borel subgroup $\mb{B}$ consisting of lower-triangular matrices and the maximal torus $\mb{T}$ consisting of diagonal matrices. Furthermore, the Weyl group $\mb{W}$ is isomorphic to the symmetric group $S_n$ (respectively, $S_{2n}$). We hope this helps the reader distinguish our case-specific results from the general results that we need along the way.

\subsection{The wonderful compactification}\label{sec:bg-wc}
We review the notion of the wonderful compactification of a general spherical homogeneous space. Using our convention outlined above, let $G$ be a connected, reductive algebraic group defined over $\C$. An algebraic subgroup $H$ of $G$, as well as the homogeneous space $G/H$, is called \textbf{spherical} if a Borel subgroup $B$ has finitely many orbits on $G/H$ (or equivalently, if $H$ has finitely many orbits on $G/B$).  Some such homogeneous spaces, namely partial flag varieties, are complete, while others (for example, symmetric homogeneous spaces) are not.  In the event that $G/H$ is not complete, a \textit{completion} of it is a complete $G$-variety $X$ which contains an open dense subset $X^0$ $G$-equivariantly isomorphic to $G/H$.  $X$ is a \textit{wonderful compactification} of $G/H$ if it is a completion of $G/H$ which is a ``wonderful'' spherical $G$-variety; this means that $X$ is a smooth spherical $G$-variety whose boundary (the complement of $X^0$) is a union of smooth, irreducible $G$-stable divisors $D_1,\hdots,D_r$ (the \textbf{boundary divisors}) with normal crossings and non-empty transverse intersections, such that the $G$-orbit closures on $X$ are precisely the partial intersections of the $D_i$'s.  The number $r$ is called the \textit{rank} of the homogeneous space $G/H$.

The number of $G$-orbits on $X$ is then $2^r$, and they are parametrized by subsets of $\{1,\hdots,r\}$, with a given subset determining the orbit by specifying the set of boundary divisors containing its closure.  It is well-known that the subsets of $\{1,2,\hdots,r\}$ are in bijection with the compositions of $r + 1$.  A \textbf{composition} of $n$ is simply a tuple $\mu=(\mu_1,\hdots,\mu_s)$ with $\sum_i \mu_i = n$. For a given composition $\mu=(\mu_1,\hdots,\mu_s)$, we define the integers $\nu_2,\hdots,\nu_s$  by the formula $\nu_i = \sum_{j=1}^{i-1} \mu_j$ for $i=2,\dots, s$. By convention, we set $\nu_1 = 0$. In words, $\nu_i$ is the sum of the first $i-1$ parts of the composition $\mu$.
We parametrize the $\mb{G}$-orbits ($\mb{G} = \mb{GL}_n$ or $\mb{G} = \mb{GL}_{2n}$) in our examples by compositions of $n$, with $n-1$ being the rank of both of the symmetric spaces $\mb{GL}_n/\mb{GO}_n$ and $\mb{GL}_{2n}/\mb{GSp}_{2n}$.


\subsection{Our examples}\label{sec:bg-examples}
We now describe the two primary examples to which we will directly apply the general results of this paper.  The first is the wonderful compactification $\mb{X}$ of the space of all smooth quadric hypersurfaces in $\mathbb{P}^{n-1}$, i.e. $G/H$, where $(\mb{G},\mb{H})=(\mb{GL}_n,\mb{GO}_n)$, classically known as the variety of complete quadrics.

Choose $\mb{B}$ to be the lower-triangular subgroup of $\mb{GL}_n$, and $\mb{T}$ to be the maximal torus consisting of diagonal matrices.  We realize $\mb{H}_0=\mb{O}_n$ as the fixed points of the involution given by $\theta(g) = J (g^t)^{-1} J$, where $J$ is the $n \times n$ matrix with $1$'s on the antidiagonal, and $0$'s elsewhere.  When $\mb{H}_0$ is realized in this way, $\mb{H}_0 \cap \mb{B}$, the lower-triangular subgroup of $\mb{H}_0$, is a Borel subgroup, and $\mb{S}_0 := \mb{H}_0 \cap \mb{T}$ is a maximal torus of $\mb{H}_0$, consisting of all elements of the form
\begin{equation}\label{eq:orthogonal-torus-even}
\text{diag}(a_1,\hdots,a_m,a_m^{-1},\hdots,a_1^{-1}),
\end{equation}
where $a_i \in \C^*$ for $i=1,\dots,m$ when $n=2m$ is even, and of the form
\[ \text{diag}(a_1,\hdots,a_m,1,a_m^{-1},\hdots,a_1^{-1}), \]
where $a_i \in \C^*$ for $i=1,\dots,m$ when $n=2m+1$ is odd.
The Lie algebra $\mb{\mathfrak{s}}_0$ of $\mb{S}_0$ then takes the form
\begin{equation}\label{eq:orthogonal-torus-lie-even}
\text{diag}(a_1,\hdots,a_m,-a_m,\hdots,-a_1),
\end{equation}
where $a_i \in \C$ for $i=1,\dots,m$ in the even case, and
\[ \text{diag}(a_1,\hdots,a_m,0,-a_m,\hdots,-a_1), \]
where $a_i \in \C$ for $i=1,\dots,m$ in the odd case.

Note that the diagonal elements of $H$ form a maximal torus $\mb{S}$ of dimension one greater than $\dim \mb{S_0}$.  The general element of $\mb{S}$ is of the form
\begin{equation}\label{eq:orthogonal-bigger-torus-even}
\text{diag}(\lambda a_1,\hdots,\lambda a_m,\lambda a_m^{-1},\hdots,\lambda a_1^{-1})
\end{equation}
in the even case, and of the form
\begin{equation}\label{eq:orthogonal-bigger-torus-odd}
\text{diag}(\lambda a_1,\hdots,\lambda a_m,\lambda,\lambda a_m^{-1},\hdots,\lambda a_1^{-1})
\end{equation}
in the odd case. Here, $\lambda$ is an element of $\C^*$ and the $a_i$'s are as before.

The Lie algebra $\mb{\mathfrak{s}}$ of $\mb{S}$ then consists of diagonal matrices of the form
\begin{equation}\label{eq:orthogonal-bigger-torus-lie-even}
\text{diag}(\lambda + a_1,\hdots,\lambda + a_m,\lambda - a_m,\hdots,\lambda - a_1)
\end{equation}
in the even case, and of the form
\begin{equation}\label{eq:orthogonal-bigger-torus-lie-odd}
\text{diag}(\lambda + a_1,\hdots,\lambda + a_m,\lambda,\lambda - a_m,\hdots,\lambda - a_1).
\end{equation}
in the odd case. Here, $\lambda$ is an element of $\C$ and the $a_i$'s are as before.

Thus we have described the homogeneous space $\mb{G}/\mb{H}$, where $\mb{G} = \mb{GL}_n$ and $\mb{H} = \mb{GO}_n$, which is the dense $\mb{G}$-orbit on $\mb{X}$. We now describe the other $\mb{G}$-orbits.  As mentioned in Section \ref{sec:bg-wc}, they are in bijection with compositions $\mu$ of $n$.

Corresponding to $\mu$, we have a standard parabolic subgroup $\mb{P}_{\mu}=\mb{L}_{\mu} \ltimes \mb{U}_{\mu}$ containing $\mb{B}$ whose Levi factor $\mb{L}_{\mu}$ is $\mb{GL}_{\mu_1} \times \hdots \times \mb{GL}_{\mu_s}$, embedded in $\mb{GL}_n$ in the usual way, as block diagonal matrices. The $\mb{G}$-orbit $\caO_{\mu}$ corresponding to $\mu$ is then isomorphic to $\mb{G}/\mb{H}_{\mu}$, where $\mb{H}_{\mu}$ is the group
\[ (\mb{GO}_{\mu_1} \times \hdots \times \mb{GO}_{\mu_s}) \ltimes \mb{U}_{\mu}, \]
where $\mb{GO}_{\mu_i} = \mb{Z}_{\mu_i} \mb{O}_{\mu_i}$ is realized in $\mb{GL}_{\mu_i}$ as described above. Then $\mb{B} \cap \mb{H}_{\mu}$ is a Borel subgroup of $\mb{H}_{\mu}$, and $\mb{S}_{\mu} := \mb{T} \cap \mb{H}_{\mu}$ is a maximal torus of $\mb{H}_{\mu}$.

Note that $\mb{S}_{\mu}$ is diagonal, and consists of $s$ ``blocks'', the $i$th block consisting of those diagonal entries in the range $\nu_i+1,\hdots,\nu_i+\mu_i$.
If $\mu_i=2m$ is even, then the $i$th block is of the form
\begin{equation}\label{eq:block-torus-even}
\text{diag}(\lambda_i a_{i,1},\hdots,\lambda_i a_{i,m},\lambda_i a_{i,m}^{-1},\hdots,\lambda_i a_{i,1}^{-1}).
\end{equation}
The corresponding $i$th block of an element of ${\mb{\mathfrak{s}}}_{\mu}$ is then of the form
\begin{equation}\label{eq:block-torus-lie-even}
\text{diag}(\lambda_i + a_{i,1},\hdots,\lambda_i + a_{i,m},\lambda_i - a_{i,m},\hdots,\lambda_i - a_{i,1}).
\end{equation}

If $\mu_i=2m+1$ is odd, then the $i$th block is of the form
\begin{equation}\label{eq:block-torus-odd}
\text{diag}(\lambda_i a_{i,1},\hdots,\lambda_i a_{i,m},\lambda_i, \lambda_i a_{i,m}^{-1},\hdots,\lambda_i a_{i,1}^{-1}).
\end{equation}
The $i$th block of an element of $\mb{\mathfrak{s}}_{\mu}$ is correspondingly of the form
\begin{equation}\label{eq:block-torus-lie-odd}
\text{diag}(\lambda_i + a_{i,1},\hdots,\lambda_i + a_{i,m},\lambda_i,\lambda_i-a_{i,m},\hdots,\lambda_i-a_{i,1}).
\end{equation}

Our second primary example is the wonderful compactification of $(\mb{G},\mb{H}^\prime)$, $(\mb{G},\mb{H}^\prime) = (\mb{GL}_{2n},\mb{GSp}_{2n}$.  $\mb{H}^\prime$ is a central extension of $\mb{H}_0^\prime=\mb{Sp}_{2n}$, the latter group being realized as the fixed points of the involutory automorphism of $\mb{GL}_{2n}$ given by $g \mapsto \widetilde{J} (g^t)^{-1} \widetilde{J}$, where $\widetilde{J}$ is the $2n \times 2n$ antidiagonal matrix whose antidiagonal consists of $n$ $1$'s followed by $n$ $-1$'s, reading from the northeast corner to the southwest. Let $\mb{S}_0' := \mb{Sp}_{2n} \cap \mb{T}$.

Once again taking $\mb{B}$ to be the lower-triangular Borel of $\mb{GL}_{2n}$, and $\mb{T}$ to be the diagonal maximal torus of $\mb{GL}_{2n}$, one checks that $\mb{H}_0^\prime \cap \mb{B}$ is a Borel subgroup of $\mb{H}_0^\prime$, and that $\mb{S}_0^\prime := \mb{H}_0^\prime \cap \mb{T}$ is a maximal torus of $\mb{H}_0^\prime$. The corresponding torus $\mb{S}^\prime$ of $\mb{H}^\prime$ is then of exactly the same format as indicated in \eqref{eq:orthogonal-bigger-torus-even}, while its Lie algebra $\mb{\mathfrak{s}}'$ is as indicated by \eqref{eq:orthogonal-bigger-torus-lie-even}.

The additional $\mb{G}$-orbits on the wonderful compactification $\mb{X}^\prime$ of $\mb{GL}_{2n}/\mb{GSp}_{2n}$ again correspond to compositions $\mu=(\mu_1,\hdots,\mu_s)$ of $n$.
For such a composition, we let $\mb{P}_{\mu}=\mb{L}_{\mu} \ltimes \mb{U}_{\mu}$ be the standard parabolic subgroup whose Levi factor is $\mb{GL}_{2\mu_1} \times \hdots \times \mb{GL}_{2\mu_s}$,
embedded in $\mb{GL}_{2n}$ as block diagonal matrices.  Then the $\mb{G}$-orbit corresponding to $\mu$ is isomorphic to $\mb{G}/\mb{H}_{\mu}'$, where
\[ \mb{H}_{\mu}' = (\mb{GSp}_{2\mu_1} \times \hdots \times \mb{GSp}_{2\mu_s}) \ltimes \mb{U}_{\mu}, \]
with each $\mb{GSp}_{2\mu_i} = \mb{Z}_{2\mu_i} \mb{Sp}_{2\mu_i}$ embedded in the corresponding $\mb{GL}_{2\mu_i}$ just as described above.

The torus $\mb{S}_{\mu}'$ then consists of $s$ ``blocks'', just as in the orthogonal case.  This time, each
block is of even dimension, so each is of the form described by \eqref{eq:block-torus-even}.  The corresponding block
of the Lie algebra $\mb{\mathfrak{s}}_{\mu}'$ is then of the form indicated in \eqref{eq:block-torus-lie-even}.

\subsection{Some general results}\label{sec:bg-general}

We now introduce several general observations that are applicable to our chosen examples.  In this subsection, returning to the conventions set forth in the beginning of Section \ref{sec:background}, let $G$ be an arbitrary connected, reductive algebraic group over $\mathbb{C}$ and let $P$ be a parabolic subgroup of $G$ containing the Borel subgroup $B$ with Levi decomposition $P = L \ltimes U$, where $L$ is a Levi subgroup of $G$ containing $T$ and $U$ is the unipotent radical of $P$.  Let $H_L$ be a subgroup of $L$ and consider the $G$-variety $V = G \times^P L / H_L$, the quotient of $G \times L / H_L$ by the action of $P$, where $P$ acts on $G$ by right multiplication and on $L / H_L$ via its projection to $L$. Note that $V$ is a $G$-variety via left multiplication of $G$ on the first factor.$V$ is a homogeneous $G$-variety, and the stabilizer subgroup of the point $[1, 1 H_L / H_L]$ is $H := H_L U \subseteq P$.  The construction of $V \cong G/H$ from $L/H_L$ (or equivalently of $H \subseteq G$ from $H_L \subseteq L$) is called \emph{parabolic induction}.

Note that in our examples, $\mb{H}_{\mu}$ is obtained via parabolic induction from $\mb{GO}_{\mu_1} \times \cdots \times \mb{GO}_{\mu_k}$ (playing the role of $H_L$) and $\mb{H}_{\mu}'$ is obtained via parabolic induction from $\mb{GSp}_{\mu_1} \times \cdots \times \mb{GSp}_{\mu_k}$ (likewise playing the role of $H_L$).  We now summarize some results from the literature that describe the role of parabolic induction for wonderful varieties.

\begin{definition}
Let $G$ be a reductive algebraic group. A subgroup $H$ of $G$ is said to be \textbf{symmetric} if there exists an algebraic involution $\theta : G \rightarrow G$ such that, letting $K = G^{\theta} = \{ g \in G : \theta(g) = g \}$ and letting $Z$ denote the center of $G$, we have $ZK^0 \subseteq H \subseteq ZK$.
\end{definition}

\begin{proposition}\label{prop:par ind}
\begin{enumerate}
\item Let $H$ be a symmetric subgroup of $G$ such that $G/H$ has a wonderful compactification $X$.
Then every $G$-orbit of $X$ is obtained via parabolic induction from some symmetric homogeneous space $L/H_L$ associated to some Levi subgroup $L$ of $G$.
\item If $H_L$ is a spherical subgroup of $L$ such that $L / H_L$ contains a single closed $B_L$-orbit (with $B_L$ a Borel subgroup of $L$) and $H$ is the subgroup of $G$ obtained by parabolic induction, i.e. $G/H \cong G \times^P L/H_L$, then $H$ is a spherical subgroup of $G$ and $G/H$ contains a single closed $B$-orbit.
\end{enumerate}
\end{proposition}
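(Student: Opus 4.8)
The plan is to treat the two statements separately, reducing each to a structural result available in characteristic $0$; since the proposition collects facts from the literature, I describe the arguments and indicate where they are found rather than reproving them in detail.

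For part (1), the idea is to invoke the explicit description of the $G$-orbit structure of the wonderful compactification of a symmetric space due to De Concini and Procesi. With $H = G^{\theta}$, fix a $\theta$-stable maximal torus $T$ containing a maximal $\theta$-split subtorus $A$ and index the simple restricted roots by $\{1,\dots,r\}$. Then each $G$-orbit of $X$ is the total space of a $G$-equivariant fibration over a partial flag variety $G/P$, where $P \supseteq B$ is the standard parabolic determined by the orbit (with $P = G$ for the open orbit), and the fiber over the base point is the symmetric homogeneous space attached to the restriction of $\theta$ to the Levi factor $L$ of $P$, possibly twisted by a central subtorus of $L$ acting trivially on the fiber. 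This is precisely the assertion that the orbit is $G$-equivariantly isomorphic to $G \times_P (L/H_L)$ with $H_L$ a symmetric subgroup of $L$, i.e.\ that it is obtained from $L/H_L$ by parabolic induction; and a symmetric subgroup is spherical by the classical theorem that a symmetric space carries a dense Borel orbit. A convenient modern reference for the orbit description is Timashev's monograph. The point that must be checked against the reference is that the fiber is a single $L$-orbit, rather than merely a spherical $L$-variety; this is a consequence of the smoothness and transversality built into the definition of a wonderful variety.

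For part (2), write $H = H_L \ltimes U \subseteq P = L \ltimes U$, so that $G/H \cong G \times_P (L/H_L)$ by the definition of parabolic induction, $P$ acting on $L/H_L$ through its projection onto $L$; set $B_L := B \cap L$, a Borel subgroup of $L$. Using the Bruhat decomposition $G = \bigsqcup_{w \in W^P} BwP$, where $W^P$ is the set of minimal-length representatives of $W/W_P$, one gets $G/H = \bigsqcup_{w \in W^P} BwP/H$ with $BwP/H \cong B \times_{(w^{-1}Bw \cap P)} (w \cdot L/H_L)$. Since $U$ acts trivially on $L/H_L$ and, for $w \in W^P$, the subgroup $w^{-1}Bw \cap P$ projects onto a Borel subgroup of $L$, the $B$-orbits on $BwP/H$ are in bijection with the orbits of a Borel subgroup of $L$ on $L/H_L$. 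As $L/H_L$ is spherical this set is finite for each $w$, and $W^P$ is finite, so $G/H$ has finitely many $B$-orbits; hence $H$ is spherical in $G$. (Taking $w$ to be the longest element of $W^P$, one sees the unique dense $B$-orbit on $G/H$ lies over the open cell of $G/P$ and corresponds to the dense $B_L$-orbit on $L/H_L$; this is the familiar fact that parabolic induction preserves sphericity.)

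It remains to count closed $B$-orbits, and the plan is to match them with the closed $B_L$-orbits on $L/H_L$. Let $x_0 \in G/P$ be the unique $B$-fixed point; the fiber $\pi^{-1}(x_0)$ of $\pi \colon G \times_P (L/H_L) \to G/P$ is the closed subvariety of $G/H$ isomorphic to $L/H_L$ on which $B$ acts through its quotient $B \to B_L$ (this is the cell indexed by $w = e$ above, since $BeP/H = P/H = L/H_L$). Thus every closed $B_L$-orbit on $L/H_L$ gives a closed $B$-orbit on $G/H$, and conversely a $B$-orbit contained in $\pi^{-1}(x_0)$ is closed in $G/H$ exactly when it is closed in $L/H_L$. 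One then shows that \emph{every} closed $B$-orbit of $G/H$ lies in $\pi^{-1}(x_0)$: if $\caO$ is closed then $\overline{\pi(\caO)}$ is a complete $B$-stable subvariety of $G/P$ and hence contains $x_0$, and the homogeneity of $L/H_L$ forces $\pi(\caO) = \{x_0\}$. Combining both directions, $G/H$ has exactly as many closed $B$-orbits as $L/H_L$ has closed $B_L$-orbits, namely one. The main obstacle is precisely this last step: since $G/H$ need not be complete, ruling out a closed $B$-orbit mapping onto a non-closed $B$-orbit of $G/P$ (``escaping to infinity'' in the fibers) uses the homogeneity of $L/H_L$ in an essential way, and it is for this correspondence of closed orbits that one should appeal to the general theory of $B$-orbits on parabolically induced homogeneous spaces rather than argue by hand.
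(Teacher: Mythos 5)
Your overall route matches the paper's. For part (1) you, like the paper, reduce to De Concini--Procesi's description of the $G$-orbits of $X$ as fibrations over $G/P$ with symmetric fibers for the Levi; the paper additionally records why the resulting bijective morphism $G\times_P L/H_L \to V$ is an isomorphism (separability, which is the reason for the characteristic-zero hypothesis), a point you leave implicit. For the sphericity claim in part (2), your count of $B$-orbits over the Bruhat cells of $G/P$ is a harmless variant of the paper's argument, which instead exhibits the dense $B$-orbit through $[\widetilde{w_0},\,1H_L/H_L]$ directly; both are fine.

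The genuine gap is exactly the step you flag yourself: that every closed $B$-orbit lies in the fiber $\pi^{-1}(x_0)$ over the $B$-fixed point of $G/P$. Your stated reason does not work: from ``$\overline{\pi(\mathcal{O})}$ is complete and $B$-stable, hence contains $x_0$'' nothing follows, because \emph{every} $B$-orbit $BwP/P$ in $G/P$ has $x_0$ in its closure; and ``homogeneity of $L/H_L$ forces $\pi(\mathcal{O})=\{x_0\}$'' is an assertion rather than an argument, while the appeal to an unnamed ``general theory of $B$-orbits on parabolically induced homogeneous spaces'' is precisely the statement that needs proving. The paper closes this step as follows: by Brion's Lemma 1.2 (the reference [Brion-98] in the paper), every $B$-orbit of $G/H$ has the form $BwY'$ with $w\in W^L$ and $Y'$ a $B_L$-orbit in the fiber $L/H_L$; since the projection $G/H\to G/P$ extends to the wonderful compactification of $G/H$, which is complete, a closed $B$-orbit must have closed image $BwP$ in $G/P$, forcing $w=1$; then $Y=BY'$ with $Y'$ closed in $L/H_L$, which gives the bijection between closed $B$-orbits of $G/H$ and closed $B_L$-orbits of $L/H_L$. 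To complete your write-up you should either invoke that lemma explicitly or supply a direct argument (for instance, using a minimal parabolic $P_\alpha$ with $\ell(s_\alpha w)<\ell(w)$) showing that an orbit lying over a non-closed Bruhat cell cannot be closed.
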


\begin{proof}
The first result is a reformulation of a result of de Concini and Procesi \cite[Theorem 5.2]{DeConcini-Procesi}.  They show that a $G$-orbit $V$ has a $G$-equivariant map $V \rightarrow G / P$ with fiber $L / H_L$.  (In fact, they show that the closure of $V$ maps $G$-equivariantly to $G /P$ with fiber the wonderful compactifcation of $L / H_L$, from which our statement follows by restricting the map to $V$.)  It follows that there is a bijective morphism $\phi: G \times^P L / H_L \rightarrow V$, which is an isomorphism if $\phi$ is separable \cite[discussion after Theorem 2.2]{Timashev}.

The second result follows from \cite[Lemma 6]{Brion-01}, but we give a direct proof for completeness. We again consider the $G$-equivariant fibre bundle $\pi: G/H \rightarrow G/P$, with fiber $L/H_L$. Since $H_L$ is a spherical subgroup of $L$ and $P$ is a spherical subgroup of $G$ (by the classical Bruhat decomposition), it follows that $H$ is a spherical subgroup of $G$. Moreover, since there is a unique closed $B_L$-orbit in $L / H_L$ and a unique closed $B$-orbit in $G/P$, there is a unique closed $B$-orbit in $G/H$, namely the $B$-orbit which maps via $\pi$ to the closed $B$-orbit of $G/P$ and whose fiber over the base point $1P/P$ is identified with the closed $B_L$-orbit in $L / H_L$.

\end{proof}

\subsection{Weak order and $W$-sets}\label{sec:bg-w-sets}

Let $H$ be a spherical subgroup of the connected, reductive algebraic group $G$ and assume that there exists a wonderful compactification $X$ of $G/H$. We continue with the notation of the previous subsection. In this subsection we review the notion of the weak order on the set of $B$-orbit closures of $X$. Note that $X$ is a spherical variety, meaning that $B$ has finitely many orbits on $X$, so the set of $B$-orbits equipped with the weak order is a finite poset.

The \textbf{weak order} on the set of $B$-orbit closures of $X$ is the one whose covering relations are given by $Y \prec Y'$ if and only if $Y' = P_{\ga} Y \neq Y$ for some simple root $\ga$ of $T$ relative to $B$. In general, $Y \leq Y'$ if and only if $Y'=P_{\ga_s} \hdots P_{\ga_1}Y$ for some sequence of simple roots $\ga_1,\hdots,\ga_s$.

When considering the weak order on $X$, it suffices to consider it on the individual $G$-orbits separately. Indeed, if $Y$ and $Y'$ are the closures of $B$-orbits $Q$ and $Q'$, respectively, and if $Y \leq Y'$ in weak order, then $Q$ and $Q'$ lie in the same $G$-orbit.  Therefore, we focus on the weak order on $B$-orbit closures on a homogeneous space $G/H$. The Hasse diagram of the weak order poset can be drawn as a graph with labeled edges,
each edge with a weight of  either $1$ or $2$. This is done as follows: For each cover $Y \prec Y'$ with $Y'=P_{\ga}Y$, we draw an edge from $Y$ to $Y'$, and label it by the simple reflection $s_{\ga}$.  If the natural map $P_{\ga} \times^B Y \rightarrow Y'$ is birational, then the edge has weight $1$; if the map is generically $2$-to-$1$, then the edge has weight $2$.  (These are the only two possibilities.)  The edges of weight $2$ are frequently depicted as double edges \cite{Brion-01}.

In the graph described above, there is a unique maximal element, since $G/H$ is the closure of its dense $B$-orbit. Given a $B$-orbit closure $Y$, its \textbf{$\mb{W}$-set}, denoted $W(Y)$, is defined as the set of all elements of $W$ obtained by taking the product of edge labels of paths which start at $Y$ and end at $G/H$. The weight $d(Y,w)$ alluded to before \eqref{eq:brion-formula} is defined as the number of double edges in any such path whose edge labels multiply to $w$. (Note that there is one such path for each reduced expression of $w$, but all such paths have the same number of double edges, so that $d(Y,w)$ \textit{is} well-defined \cite{Brion-01}.)  We have now recalled all explanation necessary to understand \eqref{eq:brion-formula}.

Next, we briefly recall results of \cite{Can-Joyce,Can-Joyce-Wyser-15} which give explicit descriptions of these $W$-sets in the cases described in Section \ref{sec:bg-examples}.

We begin by addressing the case of the extended orthogonal group $\mb{H} = \mb{GO}_n$ and the variants $\mb{H}_{\mu}$.  For a set $A \subseteq [n] := \{1, 2,\dots, n \}$, say that $a < b$ are \emph{adjacent in $A$} if there does not exist $c \in A$ such that $a < c < b$.  Let $\mathcal{W}_n$ denote the set of permutations $w \in S_n$ that have the following recursive property.  Initialize $A_1 = [n]$.  For $1 \leq i \leq \lfloor n/2 \rfloor$, assume that $w(1), \dots, w(i-1)$ and $w(n+2-i), \dots, w(n)$ have already been defined.  (This condition is vacuous in the case $i = 1$.)  Then $w(i)$ and $w(n+1-i)$ must be adjacent in $A_i$ and $w(i)$ must be greater than $w(n+1-i)$.  Define $A_{i+1} := A_i \setminus \{ w(i), w(n+1-i) \}$.  This completely defines $w$ when $n$ is even, and if $n = 2k+1$ is odd, then $A_{k+1}$ will consist of a single element $m$, so define $w(k+1) = m$.  For example, $\mathcal{W}_5$ consists of the eight elements of $S_5$ given in one-line notation by $24531$, $25341$, $34512$, $35142$, $42513$, $45123$, $52314$, $53124$.

\begin{proposition}[\cite{Can-Joyce}]\label{prop:w-set-orthogonal-dense}
Let $\mb{Y}$ denote the closed $\mb{B}$-orbit in $\mb{G}/\mb{H}$ where $\mb{G} = \mb{GL}_n$ and $\mb{H} = \mb{GO}_n$.  Then $W(\mb{Y}) = \mathcal{W}_n$.
\end{proposition}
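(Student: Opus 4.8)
The plan is to compute the $W$-set of the closed $B$-orbit $Y$ in $GL_n/GO_n$ directly from the weak order, exploiting the combinatorial description of that order given in \cite{Can-Joyce}. Recall from Section \ref{sec:bg-w-sets} that $W(Y)$ is the set of products of edge labels along paths from $Y$ up to the dense orbit $G/H$ in the weak order Hasse diagram; since $Y$ is the unique minimal element, every maximal chain starts at $Y$, and the products of edge labels of all such chains give precisely $W(Y)$. So the statement $W(Y) = \mathcal{W}_n$ amounts to identifying the set of ``words'' readable off maximal chains in the weak order poset on $B \backslash (GL_n/GO_n)$ with $\mathcal{W}_n$.

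First I would set up the action of the minimal parabolics $P_\alpha$, $\alpha = s_i$, on $B$-orbits in $GL_n/GO_n$ via the standard parametrization of these orbits by twisted involutions (or clans, in the language used for $GL_n$-symmetric pairs). Starting from the closed orbit, which corresponds to the ``most twisted'' involution, one tracks how a simple reflection $s_i$ raises an orbit in the weak order and whether the raising is of type I (weight-$1$ edge) or type II (weight-$2$ edge). The key is that reading off a maximal chain from $Y$ to the open orbit corresponds exactly to filling in the values $w(1), w(n), w(2), w(n-1), \dots$ of a permutation $w$ in the nested ``outside-in'' fashion described in the definition of $\mathcal{W}_n$: at stage $i$ one is free to choose which of the currently-available adjacent pairs $\{w(i), w(n+1-i)\}$ (adjacent in the residual set $A_i$) to bring to the outermost available positions, and the constraint $w(i) > w(n+1-i)$ records the fixed relative order forced by the involution $\theta(g) = J(g^t)^{-1}J$. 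I would make this correspondence precise by an explicit induction on $n$ (or on the length of the chain), showing that the set of reduced words from $Y$ to $G/H$ biject with the recursive choices in the definition of $\mathcal{W}_n$, and that the product of the simple reflections along such a chain is exactly the permutation $w$ so constructed.

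An alternative, and perhaps cleaner, route would be to cite or re-derive the main theorem of \cite{Can-Joyce}, which presumably already establishes this $W$-set description — in which case the ``proof'' is essentially a matter of matching notation: verifying that the recursive condition defining $\mathcal{W}_n$ here coincides with whatever normal form (e.g.\ in terms of standard involutions or atoms) appears there, and checking the base cases $n = 1, 2$ by hand together with the sample $\mathcal{W}_5$ computation listed in the text. Given that the proposition is attributed to \cite{Can-Joyce}, I expect the intended proof is short: state that this is (a reformulation of) the result of that paper, indicate the dictionary between the two combinatorial models, and verify consistency on a small example.

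The main obstacle, in either approach, is the bookkeeping of weak-order covers: one must be sure that \emph{every} simple reflection $s_i$ that raises a given orbit corresponds to a legitimate ``adjacency'' move in $\mathcal{W}_n$ and conversely, and that no maximal chain terminates prematurely or produces a permutation outside $\mathcal{W}_n$. Concretely, the delicate point is the adjacency condition: when $w(i), w(n+1-i)$ are pulled to the outer positions, the reason they must be \emph{adjacent} in $A_i$ is that only adjacent pairs can be simultaneously ``exposed'' by a sequence of simple reflections without first moving something between them — this is where the geometry of the $B$-orbit degenerations is really being used, and it is the one step I would expect to require genuine care rather than routine verification.
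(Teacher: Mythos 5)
Your second route is exactly what the paper does: Proposition \ref{prop:w-set-orthogonal-dense} is simply cited from \cite{Can-Joyce}, with no independent proof given, and the ``dictionary'' you anticipate is supplied by Remark \ref{rem:w-set-comparison}, namely $\mathcal{W}_n = \{ w_0 w^{-1} w_0 : w \in D_n \}$, the inversion accounting for the opposite order convention in \cite{Can-Joyce} and the conjugation by $w_0$ for the upper- versus lower-triangular choice of Borel. Your first, direct weak-order/clan computation is more than the paper attempts here (it is essentially the content of \cite{Can-Joyce} itself), so the proposal is correct and, in its intended reading, matches the paper's approach.
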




\begin{remark}\label{rem:w-set-comparison}
The ``$\mathcal{W}$-set'' of~\cite{Can-Joyce}, which is denoted by $D_n$ there, differs slightly from ours.
More precisely, the relationship between $D_n$ and our $\mathcal{W}$-set is
$$
\mathcal{W}_n = \{ w_0 w^{-1} w_0 : w \in D_n \}.
$$
Let us explain the reason for the discrepancy. First, the partial order considered in \cite{Can-Joyce} is the opposite of the weak order on Borel orbits considered here, which necessitates inverting the elements of $D_n$. Second, we consider here $\mb{B}$ to be the Borel subgroup of lower triangular matrices in $GL_n$, while \cite{Can-Joyce} uses the Borel subgroup of upper triangular matrices. This necessitates conjugating the elements by $w_0$.
\end{remark}

Similarly, we define a set $\mathcal{W}_{\mu} \subseteq S_n$ associated with a composition $\mu = (\mu_1, \dots, \mu_s)$ of $n$. We begin by recalling the notion of a $\mu$-string \cite{Can-Joyce}. Recall that we have defined $\nu_k = \sum_{j=0}^{k-1} \mu_j$ for $2 \leq k \leq s$; by convention $\nu_1 = 0$. The $i$th $\mu$-string of a permutation $w \in S_n$, denoted by $\text{str}_i(w)$ is the word $w(\nu_i + 1)\, w(\nu_i + 2)\, \dots w(\nu_{i+1})$. For example, if $w = 3715462$ is a permutation from $S_7$ (written in one-line notation) and $\mu = (2,4,1)$, then the second $\mu$-string of $w$ is the word $1546$. Let $A \subseteq [n]$ have cardinality $k$ and assume a word $\omega$ of length $k$ is given that uses each letter of $A$ exactly once. Define a bijection between $[k]$ and $A$ by associating to $i \in [k]$ the $i$th largest element of $A$. Under this bijection, the word $\omega$ corresponds to the one-line notation of a permutation $w$ in $S_k$. Call $w$ the permutation associated to the word $\omega$.  Continuing the example, the permutation associated to the word $\omega = 1546$ is $1324 \in S_4$ (in one-line notation).

The set $\mathcal{W}_{\mu}$ consists of all $w \in S_n$ such that the letters of $\text{str}_i(w)$ are precisely those $j$ such that $ n - \nu_{i+1} < j \leq n - \nu_i $ and the permutation associated to $\text{str}_i(w)$ is an element of $\mathcal{W}_{\mu_i}$.  For example, $\mathcal{W}_{(4,2)}$ consists of the three elements of $S_6$ given in one-line notation by $465321$, $563421$, $643521$.

\begin{proposition}[\cite{Can-Joyce}]\label{prop:w-set-orthogonal-general}
Let $\mb{Y}_{\mu}$ denote the closed $\mb{B}$-orbit in $\mb{G}/\mb{H}_{\mu}$ where $\mb{G} = \mb{GL}_n$ and $\mb{H}_{\mu} = (\mb{GO}_{\mu_1} \times \cdots \times \mb{GO}_{\mu_s}) \ltimes \mb{U}_{\mu}$, i.e. $\mb{H}_{\mu}$ is obtained by parabolic induction from $\mb{H}_L = \mb{GO}_{\mu_1} \times \cdots \times \mb{GO}_{\mu_s} \subseteq \mb{L} = \mb{GL}_{\mu_1} \times \cdots \times \mb{GL}_{\mu_s}$ to $\mb{G} = \mb{GL}_n$.  Then $W(\mb{Y}_{\mu}) = \mathcal{W}_{\mu}$.
\end{proposition}


Just as in Remark \ref{rem:w-set-comparison}, the relation between $\mathcal{W}_{\mu}$ and the set $D_{\mu}$ defined in \cite{Can-Joyce} is $\mathcal{W}_{\mu} = \{ w_0 w^{-1} w_0 : w \in D_{\mu} \}$.

We now turn to the extended symplectic case $\mb{H'} = \mb{GSp}_{2n}$ and its variants $\mb{H}_{\mu}'$. Consider the inclusion of $S_n$ into $S_{2n}$ via the map $u \mapsto \phi(u)=v=v_1v_2\dots v_{2n}$, where
$$
[v_1,v_2,\dots,v_n, v_{n+1},\dots,v_{2n-1}, v_{2n} ]  = [2u(1) - 1, 2u(2) - 1, \dots, 2u(n) -1, 2u(n), \dots, 2u(2), 2u(1)].
$$
Let $\mathcal{W}'_{2n} = \{ \phi(u) \in S_{2n} : u \in S_n \}$.
For example, $\mathcal{W}'_6$ consists of the six elements of $S_6$ given in one-line notation by $135642$, $153462$, $315624$, $351264$, $513426$, $531246$.

\begin{proposition}[\cite{Ressayre10, Can-Joyce-Wyser-15}]\label{prop:w-set-symplectic-dense}
Let $\mb{Y'}$ denote the closed $\mb{B}$-orbit in $\mb{G}/\mb{H'}$ where $\mb{G} = \mb{GL}_{2n}$ and $\mb{H'} = \mb{GSp}_{2n}$.
Then $W(\mb{Y'}) = \mathcal{W}'_n$.
\end{proposition}

We now proceed to define a set $\mathcal{W}'_{\mu} \subseteq S_{2n}$ for any composition $\mu = (\mu_1, \dots, \mu_s)$ of $n$.  The set $\mathcal{W}'_{\mu}$ consists of all $w \in S_n$ such that the letters of $\text{str}_i(w)$ are precisely those $j$ such that $n - \nu_{i+1} < j \leq n - \nu_i$ and the permutation associated to $\text{str}_i(w)$ is an element of $\mathcal{W}'_{2\mu_i}$.  For example, $\mathcal{W}'_{(2,4)}$ consists of the two elements of $S_{6}$ given in one-line notation by $123564$, $125346$.

\begin{proposition}\label{prop:w-set-symplectic-general}
Let $\mb{Y}_{\mu}'$ denote the closed $\mb{B}$-orbit in $\mb{G}/\mb{H}_{\mu}'$ where $\mb{G} = \mb{GL}_{2n}$ and $\mb{H}_{\mu}' = (\mb{GSp}_{2\mu_1} \times \cdots \times \mb{GSp}_{2\mu_s}) \ltimes \mb{U}_{\mu}$, i.e. $\mb{H}_{\mu}'$ is obtained by parabolic induction from $\mb{H}_L = \mb{GSp}_{2\mu_1} \times \cdots \times \mb{GSp}_{2\mu_s} \subseteq \mb{L} = \mb{GL}_{2\mu_1} \times \cdots \times \mb{GL}_{2\mu_s}$ to $\mb{G} = \mb{GL}_{2n}$.  Then $W(\mb{Y}_{\mu}') = \mathcal{W}'_{\mu}$.
\end{proposition}

Proposition \ref{prop:w-set-symplectic-general} is proved in exactly the same manner as Proposition \ref{prop:w-set-orthogonal-general} is proven in \cite[Theorem 4.11]{Can-Joyce}, so we omit its proof.  Alternatively, it can be obtained as a corollary of Proposition \ref{prop:w-set-symplectic-dense} by applying a general result of Brion on $W$-sets for homogeneous spaces obtained by parabolic induction \cite[Lemma 1.2]{Brion-98}.


\section{Equivariant cohomology computations}\label{sec:eq-cohomology}
\subsection{Background}\label{sec:eqvt-cohomology-background}
We start by reviewing the basic facts of equivariant cohomology that we will need to support our method of computation.  All cohomology rings use $\mathbb{Q}$-coefficients. Results of this section are generally stated without proof, as they are fairly standard.  To the reader seeking a reference we recommend \cite{Wyser-13-TG} for an expository treatment, as well as references therein.

We will apply our results to equivariant cohomology with respect to the action of $\mb{S}_{\mu}$ (respectively, $\mb{S}_{\mu}'$) on $\mb{G}/\mb{B}$, these tori having been defined in Section~\ref{sec:bg-examples}. Given a variety $X$ with an action of an algebraic torus $S$ with Lie algebra $\frs$, the equivariant cohomology is, by definition,
\[ H_S^*(X) := H^*((ES \times X)/S), \]
where $ES$ denotes a contractible space with a free $S$-action.  $H_S^*(X)$ is an algebra for the ring $\Lambda_S := H_S^*(\{\text{pt.}\})$, the $\Lambda_S$-action being given by pullback through the obvious map $X \rightarrow \{\text{pt.}\}$. The ring $\Lambda_S$ is naturally isomorphic to the symmetric algebra $\text{Sym}(\frs^*)$ on $\frs^*$. Thus, if $y_1,\hdots,y_n$ are a basis for $\frs^*$, then $\Lambda_S\simeq \text{Sym}(\frs^*)$ is isomorphic to the polynomial ring $\Q[\yy] = \Q[y_1,\hdots,y_n]$. When $X=G/B$ with $G$ a reductive algebraic group and $B$ a Borel subgroup, and if $S \subseteq T \subseteq B$ with $T$ a maximal torus in $G$, then we have the following concrete description of $H_S^*(X)$:

\begin{prop}\label{prop:eqvt-cohom-flag-var}
Let $R = \text{Sym}(\frt^*)$, $R' = \text{Sym}(\frs^*)$.  Then $H_S^*(X) = R' \otimes_{R^W} R$. If $X_1,\hdots,X_n$ are a basis for $\frt^*$, and $Y_1,\hdots,Y_m$ are a basis of $\frs^*$, elements of $H_S^*(X)$ are thus represented by polynomials in variables $x_i := 1 \otimes X_i$ and $y_i := Y_i \otimes 1$.
\end{prop}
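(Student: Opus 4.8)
The plan is to realise $H_S^*(G/B)$ as the cohomology of a pullback of the universal $G/B$-bundle and then to apply a base-change theorem, the essential algebraic input being Chevalley's theorem that $R$ is a free module over $R^W$. Concretely, since a contractible space with a free $G$-action is a fortiori a contractible space with a free $S$-action, I would take the model $ES = EG$. Writing $BG = EG/G$, $BS = EG/S$, and $BB = EG/B$, the quotient maps give a map $BS \to BG$ together with the universal $G/B$-bundle $BB \to BG$ (with fibre $G/B$). One then identifies
\[
ES\times_S(G/B) \;=\; EG\times_S(G/B) \;=\; \bigl(EG\times_S G\bigr)/B,
\]
where $EG\times_S G$ (with $S$ acting on $EG$ through the $G$-action and on $G$ by left translation, and $B\subseteq G$ acting on $G$ by right translation) is the principal $G$-bundle over $BS$ classified by the natural map $BS\to BG$; consequently $ES\times_S(G/B)$ is precisely the pullback of $BB\to BG$ along $BS\to BG$, i.e. the fibre product $BS\times_{BG}BB$.

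Next I would compute the cohomology. We have $H^*(BB;\mathbb{Q}) = H^*(BT;\mathbb{Q}) = \mathrm{Sym}(\frt^*) = R$ (as $B$ deformation retracts onto $T$), $H^*(BS;\mathbb{Q}) = \mathrm{Sym}(\frs^*) = R'$, and, by Borel's theorem applied to the connected group $G$, $H^*(BG;\mathbb{Q}) = R^W$; the maps $H^*(BG)\to H^*(BB)$ and $H^*(BG)\to H^*(BS)$ are respectively the inclusion $R^W\hookrightarrow R$ and its composite with the restriction $R = \mathrm{Sym}(\frt^*)\to\mathrm{Sym}(\frs^*) = R'$ dual to the inclusion $\frs\hookrightarrow\frt$. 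By the Chevalley--Shephard--Todd theorem, $R$ is a free $R^W$-module of rank $|W|$; in particular it is flat over $R^W = H^*(BG)$, and $BG$ is simply connected because $G$ is connected. Hence the Eilenberg--Moore spectral sequence of the fibre square $BS\times_{BG}BB$ converges and collapses at $E_2$, identifying the cohomology of the total space with the ordinary tensor product of the cohomologies of the two factors over $H^*(BG)$:
\[
H_S^*(G/B) \;=\; H^*\!\bigl(BS\times_{BG}BB\bigr) \;\cong\; H^*(BS)\otimes_{H^*(BG)}H^*(BB) \;=\; R'\otimes_{R^W}R .
\]
Taking $S = T$ recovers the classical presentation $H_T^*(G/B) = R\otimes_{R^W}R$ as a special case, a useful consistency check.

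Finally I would record the description in terms of variables. Under this isomorphism, $y_i := Y_i\otimes 1$ is the image of the generator $Y_i$ of $H^*(BS) = \Lambda_S$, an equivariant parameter, while $x_i := 1\otimes X_i$ is the image of the generator $X_i$ of $H^*(BB) = H^*(BT) = R$, which restricts on each fibre to the $S$-equivariant first Chern class of the $i$th tautological line bundle on $G/B$; together the $x_i$ and $y_i$ generate $H_S^*(G/B)$ as a ring, subject exactly to the relations $\bar f\otimes 1 = 1\otimes f$ for $f\in R^W$, i.e. every $W$-symmetric polynomial in the $x_i$ equals the corresponding polynomial in the $y_i$ obtained via the restriction $\frt^*\to\frs^*$. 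I expect the one genuinely delicate point to be the base-change/Eilenberg--Moore collapse used in the second step: although it is standard once one knows that $R$ is flat over $R^W$ and that $BG$ is simply connected, it is the place where hypotheses must be checked with some care — and, given that the paper flags the results of this section as standard, it is also the step one might reasonably simply cite, for instance via the treatment in \cite{Wyser-13-TG}. The remaining ingredients — the identification of $ES\times_S(G/B)$ with the pullback bundle, and the theorems of Borel and Chevalley — are routine.
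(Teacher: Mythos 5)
Your argument is correct, and it is worth noting that the paper itself does not prove this proposition at all: it is stated as standard, with the reader referred to \cite{Wyser-13-TG} and the references therein. What you have written is essentially the standard proof that those sources rely on, so there is no conflict with the paper's (non-existent) argument. Your chain of identifications is sound: $ES\times_S(G/B)$ with $ES=EG$ is indeed the fibre product $BS\times_{BG}BB$, the coefficient rings $H^*(BS;\mathbb{Q})=\mathrm{Sym}(\frs^*)$, $H^*(BB;\mathbb{Q})=H^*(BT;\mathbb{Q})=\mathrm{Sym}(\frt^*)$ and $H^*(BG;\mathbb{Q})=R^W$ are as you say, and the Eilenberg--Moore collapse is correctly justified by the two hypotheses you check, namely simple connectivity of $BG$ (from connectedness of $G$) and flatness of $R$ over $R^W$ (Chevalley freeness), which kills all higher $\mathrm{Tor}$ and leaves $R'\otimes_{R^W}R$ in filtration degree zero with no extension issues. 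The only remark worth making by way of comparison is that one can reach the same presentation slightly more elementarily, avoiding Eilenberg--Moore: the fibration $ES\times_S(G/B)\to BS$ with fibre $G/B$ satisfies Leray--Hirsch because the ordinary Chern classes generating $H^*(G/B)$ lift to the equivariant classes $x_i$, giving freeness of $H_S^*(G/B)$ over $\Lambda_S$ with a Schubert-class basis, after which the ring map $R'\otimes_{R^W}R\to H_S^*(G/B)$ is checked to be an isomorphism by comparing ranks; your route gets the tensor-product description more directly, at the cost of invoking a heavier spectral sequence. Either way, your identification of the generators $x_i=1\otimes X_i$, $y_i=Y_i\otimes 1$ and of the defining relations (each $W$-invariant polynomial in the $x_i$ equals its restriction to $\frs^*$ written in the $y_i$) matches exactly how the proposition is used later in the paper, e.g.\ in the restriction formula of Proposition \ref{prop:restriction-maps}.
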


To make this clear in the setting of our examples (cf. Section \ref{sec:bg-examples}), if $\mb{S}$ is taken to be the full maximal torus $\mb{T}$ of $\mb{GL}_n$, we let $\mb{X_i}$ ($i=1,\hdots,n$) be the function on $\frt$ which evaluates to $a_i$ on the element \[ t= \text{diag}(a_1,\hdots,a_n). \] We denote by $\mb{Y_i}$ ($i=1,\hdots,n$) a second copy of the same set of functions.  We then have two sets of variables as in Proposition~\ref{prop:eqvt-cohom-flag-var}, typically denoted $\xx = x_1,\hdots,x_n$ and $\yy = y_1,\hdots,y_n$, and $\mb{T}$-equivariant classes are represented by polynomials in these variables.

If $\mu=(\mu_1,\hdots,\mu_s)$ is a composition of $n$, let $\mb{T}$ be the full torus of $\mb{GL}_n$, and let $\mb{S}_{\mu}$ be the torus of $\mb{H}_{\mu}$, as in Section \ref{sec:bg-examples}. We denote by $\mb{X}_i$ the same function on $\frt$ as described above. We denote by $\mb{Y}_{i,j}$ the function on $\frs_{\mu}$ which evaluates to $a_{i,j}$ on an element of the form in \eqref{eq:block-torus-lie-even} (if $\mu_i$ is even) or \eqref{eq:block-torus-lie-odd} (if $\mu_i$ is odd).  We denote by $\mb{Z}_i$ the function which evaluates to $\lambda_i$ on an element of the form \eqref{eq:block-torus-lie-even} or \eqref{eq:block-torus-lie-odd}.  Then letting lower-case $\xx$, $\yy$, and $\zz$-variables correspond to these coordinates (with matching indices), $H_{\mb{S}_{\mu}}^*(\mb{G}/\mb{B})$ is generated by these variables, and when we seek formulas for certain $\mb{S}_{\mu}$-equivariant classes, we are looking for polynomials in these particular variables.

We next recall the standard localization theorem for torus actions. For more on this fundamental result, the reader may consult, for example, \cite{Brion-98_i}.

\begin{theorem}\label{thm:eqvt-localization}
Let $X$ be an $S$-variety, and let $i: X^S \hookrightarrow X$ be the inclusion of the $S$-fixed locus of $X$.
Then the pullback map of $\Lambda_S$-modules
\[ i^*: H_S^*(X) \rightarrow H_S^*(X^S) \]
is an isomorphism after a localization which inverts finitely many characters of $S$.
In particular, if $H_S^*(X)$ is free over $\Lambda_S$, then $i^*$ is injective.
\end{theorem}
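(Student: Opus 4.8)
The plan is to reduce the theorem to two inputs: a torsion estimate for $S$-spaces with empty fixed locus, and the long exact sequence of equivariant cohomology of the pair $(X, X \setminus X^S)$ together with the equivariant Thom isomorphism. Since $X$ is a variety it is a sufficiently well-behaved $S$-space that it admits a finite $S$-stable open cover of ``slice type'' and that equivariant Mayer--Vietoris applies; the standard reference for this machinery is \cite{Brion-98_i}, which we would cite rather than reprove.

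First I would prove the \emph{torsion lemma}: if $S$ acts on a variety $Y$ with $Y^S = \emptyset$, then $H_S^*(Y)$ is annihilated by a product of finitely many nonzero characters of $S$. Each $y \in Y$ has an $S$-invariant neighbourhood of the form $S \times_{S_y} V$, and since $y \notin Y^S$ the stabilizer $S_y$ is a proper closed subgroup; so (using $\mathbb{Q}$-coefficients, which render the component group of $S_y$ invisible) $H_S^*(S \times_{S_y} V) \cong H_{S_y}^*(V)$ is a module over $H_S^*(S/S_y) = H^*(BS_y) = \Lambda_{S_y}$, and the ring map $\Lambda_S \to \Lambda_{S_y}$ restricting characters along $S_y \hookrightarrow S$ has a kernel containing a nonzero character $\chi_y$. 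Hence $\chi_y$ annihilates $H_S^*(S \times_{S_y} V)$. Covering $Y$ by finitely many such pieces $U_1, \dots, U_r$ and running Mayer--Vietoris, the product of the corresponding characters annihilates $H_S^*(Y)$. Equivalently, regarding $H_S^*(Y)$ as a module over $\Lambda_S \cong \mathrm{Sym}(\frs^*)$, its support lies in the finite union of the proper subspaces $\frs_y \subsetneq \frs$, so it vanishes after inverting suitable linear forms.

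Next I would feed this into the localization sequence. Put $U = X \setminus X^S$. Applying the long exact sequence of the pair $(X, U)$ together with the equivariant Thom isomorphism for the (smooth, closed) fixed locus $X^S$ with its $S$-equivariant normal bundle $N$ — whose weights in each fiber are nonzero, $X^S$ being exactly the fixed locus — one obtains a Gysin pushforward $i_* \colon H_S^*(X^S) \to H_S^*(X)$ with $i^* \circ i_* = e_S(N) \cup (-)$, where $e_S(N)$ is the equivariant Euler class of $N$, a product of finitely many nonzero characters. By the torsion lemma $H_S^*(U)$ is killed by some nonzero $f \in \Lambda_S$, so after inverting $f$ the exact sequence $H_S^*(X, U) \to H_S^*(X) \to H_S^*(U)$ forces $H_S^*(X, U) \to H_S^*(X)$ to be an isomorphism, hence $i_*$ (which is this map precomposed with the Thom isomorphism $H_S^*(X^S) \cong H_S^*(X, U)$) to be an isomorphism. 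Inverting in addition the weights of $N$ makes $e_S(N)$ a unit, so $i^* = (i^* \circ i_*) \circ i_*^{-1}$ is an isomorphism as well. If $X^S$ happens to be singular one replaces the Thom isomorphism by the localized excision/Gysin statement — $H_S^*(X, U)$ still differs from $H_S^*(X^S)$ only by torsion — and the conclusion is unchanged.

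Finally, the last assertion is immediate: $\Lambda_S$ is an integral domain, so a free $\Lambda_S$-module is torsion-free; if $H_S^*(X)$ is free then $\ker(i^*)$, being killed after inverting a single nonzero element of $\Lambda_S$, is a torsion submodule of a torsion-free module and hence zero, so $i^*$ is injective. The main obstacle in this plan is the torsion lemma together with the geometric input behind it — producing a \emph{finite} $S$-stable open cover with controlled stabilizers on which $H_S^*$ is well-behaved — which is precisely the ``fairly standard'' material we would take from \cite{Brion-98_i}; the remainder is formal homological algebra and Euler-class bookkeeping.
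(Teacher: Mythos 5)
The paper does not actually prove this statement: it is recalled as a standard fact with a pointer to \cite{Brion-98_i}, so the only question is whether your argument is a sound proof of it. Your overall plan (a torsion lemma for fixed-point-free actions via slices, finitely many stabilizers and Mayer--Vietoris, then a localization sequence, then the injectivity assertion from torsion-freeness of a free $\Lambda_S$-module) is exactly the standard Quillen/Hsiang/Atiyah--Bott argument that the cited reference contains, and your final paragraph on injectivity is correct as stated.

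There is, however, a genuine gap in your second step as a proof of the theorem in the stated generality, where $X$ is an arbitrary (possibly singular) $S$-variety. Your route through the Gysin map and the identity $i^* \circ i_* = e_S(N) \cup (-)$ requires the Thom isomorphism $H_S^*(X, U) \cong H_S^{*-2d}(X^S)$, hence an equivariant tubular neighborhood and a normal bundle, i.e.\ smoothness of $X$ along $X^S$. Your fallback sentence for the singular case --- that $H_S^*(X,U)$ ``still differs from $H_S^*(X^S)$ only by torsion'' --- is asserted without justification, and it is essentially the content of the theorem at that level of generality, so the argument becomes circular there. The standard repair avoids normal bundles entirely: work with the pair $(X, X^S)$ and its long exact sequence $H_S^*(X, X^S) \to H_S^*(X) \to H_S^*(X^S)$, and show $H_S^*(X, X^S)$ is torsion by tautness (or by using \v{C}ech/Alexander--Spanier style equivariant cohomology), excising down to neighborhoods $N$ of $X^S$ and applying your torsion lemma to $X \setminus X^S$ and $N \setminus X^S$, which have no fixed points; finiteness of the set of stabilizer subtori of an algebraic torus acting on a variety gives the finitely many characters to invert. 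Note also that your torsion lemma implicitly uses this finiteness (and a finite invariant cover of a possibly noncompact variety), which deserves a word. For the way the theorem is used in this paper --- $X = G/B$ smooth, $S = T$ with finite fixed locus --- your Euler-class argument is perfectly adequate, so the gap concerns only the full generality of the statement, not its application here.
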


When $X=G/B$, $H_S^*(X) = R' \otimes_{R^W} R$ is free over $R'$ (as $R$ is free over $R^W$), so any equivariant class is entirely determined by its image under $i^*$. We will only apply this result in the event that $S=T$ is the full maximal torus of $G$, so the $S$-fixed locus is finite, being parametrized by $W$.  Then for us,
\[ H_S^*(X^S) \cong \bigoplus_{w \in W} \Lambda_S, \]
so that in fact a class in $H_S^*(X)$ is determined by its image under $i_w^*$ for each $w \in W$, where here $i_w$ denotes the inclusion of the $S$-fixed point $wB/B$ in $G/B$.  Given a class $\beta \in H_S^*(X)$ and an $S$-fixed point $wB/B$, we may denote the restriction $i_w^*(\beta)$ at $wB/B$ by $\beta|_w$.

We end the section by recalling how the restriction maps are computed.
\begin{prop}\cite{Wyser-13-TG}\label{prop:restriction-maps}
With the notation of the preceding paragraph, suppose that $\beta \in H_S^*(X)$ is represented by the polynomial $f = f(\xx,\yy)$ in variables $\xx$, $\yy$.  Then $\beta|_w \in \Lambda_S$ is the polynomial $f(wY|_{\mathfrak{s}},Y)$.
\end{prop}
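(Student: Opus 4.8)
The statement to prove is Proposition~\ref{prop:restriction-maps}, which asserts that if $\beta \in H_T^*(X)$ is represented by the polynomial $f(\xx, \yy)$, then the restriction $\beta|_w$ to the $T$-fixed point $wB/B$ is the polynomial $f(wY, Y)$, meaning we substitute $x_i \mapsto Y_{w^{-1}(i)}$ (or the appropriate permuted version) while leaving the $\yy$-variables alone.

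The plan is as follows. First I would recall the identification of $H_T^*(X) = R' \otimes_{R^W} R$ from Proposition~\ref{prop:eqvt-cohom-flag-var}, where $R = \mathrm{Sym}(\frt^*)$ contributes the $\xx$-variables and $R' = \mathrm{Sym}(\frs^*)$ contributes the $\yy$-variables, and observe that the restriction map $i_w^*$ is a map of $\Lambda_T$-algebras (here $\Lambda_T = R'$ in the localized setting, or one works with $S = T$ so $\frs = \frt$). Since both sides are generated as algebras over $\Lambda_T$ by the classes $x_i = 1 \otimes X_i$, it suffices to check the claim on these algebra generators; the general case then follows by multiplicativity and $\Lambda_T$-linearity of $i_w^*$. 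So the whole proof reduces to computing $i_w^*(x_i)$ for each $i$ and each $w \in W$.

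Next I would identify the $x_i$ geometrically: $x_i = c_1^T(\mathcal{L}_i)$ is the equivariant first Chern class of the tautological (quotient) line bundle on $G/B$ associated to the $i$th coordinate character $X_i$ of $T$. The restriction of an equivariant line bundle to a $T$-fixed point $wB/B$ is a one-dimensional $T$-representation, and its equivariant first Chern class in $H_T^*(\mathrm{pt}) = \Lambda_T$ is precisely the corresponding weight. One then computes that the fiber of $\mathcal{L}_i$ over $wB/B$ carries the $T$-action through the character $w \cdot X_i$ (i.e. the character $t \mapsto X_i(w^{-1} t w)$), so that $i_w^*(x_i) = w(X_i)$ as an element of $\mathrm{Sym}(\frt^*)$. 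Translating into the $\yy$-coordinates on the target copy of $\Lambda_T = \mathrm{Sym}(\frs^*)$ (with $\frs = \frt$ here), this is exactly the statement that $x_i$ restricts to $Y_{w^{-1}(i)}$, i.e. the claimed substitution $f \mapsto f(wY, Y)$ on generators. Applying the multiplicative structure finishes the proof.

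The main obstacle — or rather the only point requiring care — is bookkeeping with conventions: one must pin down whether $\mathcal{L}_i$ is the $i$th tautological \emph{sub}-bundle or \emph{quotient} bundle, whether $B$ is upper- or lower-triangular (the paper uses lower-triangular), and consequently whether the fixed-point weight is $w(X_i)$ or $w^{-1}(X_i)$ or $w_0 w(X_i)$, and similarly whether the notation $f(wY, Y)$ means $x_i \mapsto y_{w(i)}$ or $x_i \mapsto y_{w^{-1}(i)}$. I would fix this once and for all by computing a single small example (say $n = 2$, $w = s_1$, and the class $x_1$, whose non-equivariant restriction pattern is already determined by the Schubert calculus on $\mathbb{P}^1$) to calibrate the sign/inverse conventions, and then state the general formula consistently with that check. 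Since this proposition is standard (it is the usual description of $T$-equivariant restriction for flag varieties, as in the references cited), I expect the write-up to be short, with the example serving mainly to fix notation rather than to supply real content.
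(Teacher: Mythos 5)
The paper does not prove this proposition at all: it is one of the facts in Section \ref{sec:eqvt-cohomology-background} that are ``stated without proof, as they are fairly standard,'' with the reader referred to \cite{Wyser-13-TG} and references therein. Your argument is exactly the standard one that those references supply, and it is correct: the restriction $i_w^*$ is a homomorphism of $\Lambda_T$-algebras fixing the $\yy$-variables, so it is determined by its values on the generators $x_i = 1 \otimes X_i$ of Proposition \ref{prop:eqvt-cohom-flag-var}, and each $x_i$ is the equivariant first Chern class of a tautological line bundle whose fiber at the fixed point $wB/B$ is the one-dimensional $T$-representation with weight $w(X_i)$; multiplicativity then gives $\beta|_w = f(wY,Y)$. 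The only caveat is the one you flag yourself, namely whether the substitution is $x_i \mapsto y_{w(i)}$ or $x_i \mapsto y_{w^{-1}(i)}$; your plan to calibrate this with a small example is fine, and for the record the convention the paper actually uses can be read off from the proof of Proposition \ref{prop:formula-for-chern-class}, where the factor $x_l - y_k$ is said to vanish when the permutation sends $l$ to $k$, i.e.\ $x_i|_w = y_{w(i)}$, so that $wY$ denotes the tuple $(Y_{w(1)},\hdots,Y_{w(n)})$ in the $\xx$-slots. With that convention fixed, your write-up is a complete proof of the statement the paper leaves to the literature.
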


\subsection{A general result}

We continue with the general setup given at the beginning of Section \ref{sec:background}. Let $P = LU$ be a parabolic subgroup of $G$ containing $B$, with $L$ a Levi factor containing $T$ and $U$ the unipotent radical of $P$ contained in $B$. Let $H_L$ denote a spherical subgroup of $L$ and let $S := T \cap H_L$ be a maximal torus of $H_L$. Let $X$ denote the generalized flag variety $G / B$.

Let $H$ be the subgroup obtained by parabolic induction from $H_L \subseteq L$ to $G$, i.e. $H = H_L U$. As proven in Proposition \ref{prop:par ind}, $H$ is a spherical subgroup of $G$ with a unique closed $B$-orbit $Q$. The torus $S$ is a maximal torus of $H$, and we seek to describe $[Q] \in H^*_S(X)$.


Denote by $B_L$ the Borel subgroup $B \cap L$ of $L$, and let $Y$ denote the generalized flag variety $L/B_L$. Note that there is an $S$-equivariant embedding $j: Y \hookrightarrow X$, which induces a pushforward map in cohomology $j_*: H_S^*(Y) \rightarrow H_S^*(X)$.  Given our setup, the orbit $Q' = H \cdot 1B_L/B_L$ is closed in $Y$.  In fact, $j(Q') = Q$. To see this we just observe that $1B/B \cong 1B_L/B_L$ under the embedding $j$. Nonetheless, to avoid possible confusion, we refer to the closed orbit as $Q'$ when thinking of it as a subvariety of $Y$, and as $Q$ when thinking of it as a subvariety of $X$.

Let $W_L \subseteq W$ denote the Weyl group of $L$.  In the root system $\Phi$ for $(G,T)$, choose $\Phi^+$ to be the positive system such that the roots of $B$ are negative.  Similarly, let $\Phi_L$ be the root system for $(L,T)$ and let $\Phi_L^+ = \Phi_L \cap \Phi^+$ be the positive roots of $L$ such that the roots of $B_L$ are negative.  Given a root $r \in \Phi$ (resp., $r \in \Phi_L$), let $\mathfrak{g}_r$ denote the associated one-dimensional root space in $\mathfrak{g}$ (resp., $\mathfrak{l}$).

The next result relates the class of $Q'$ in $H_S^*(Y)$ to the class of $Q$ in $H_S^*(X)$.

\begin{prop}\label{prop:eqvt-cohomology-formula-general}
With notation as above, the classes $j^*[Q]$ and $[Q']$ are related via multiplication by the top $S$-equivariant Chern class of the normal bundle $N_Y X$, i.e. $j^*[Q] = c_d^S(N_Y X) \cap [Q']$.  This Chern class is the restriction of a $T$-equivariant Chern class for the same normal bundle, and the latter class, which we denote by $\ga$, is uniquely determined by the following properties:
  \[ \ga|_w = \begin{cases}
               \displaystyle\prod_{r \in \Phi^+ \setminus \Phi_L^+} wr & \text{ if $w \in W_L$}, \\
               0 & \text{ otherwise.}
              \end{cases}
  \]
\end{prop}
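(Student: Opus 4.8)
The plan is to identify $j_*[Q']$ with the image of $[Q]$ under the Gysin/self-intersection machinery and then compute the relevant Chern class by localization. Let me sketch the two halves.

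First, for the general relationship between $j_*[Q']$ and $[Q]$: since $j: Y \hookrightarrow X$ is an $S$-equivariant closed embedding of smooth varieties and $Q' = Q$ as a subvariety, we have $Q = j(Q')$, so $[Q] = j_*[Q']$ as cycles pushed forward. But the excess-intersection / self-intersection formula says that $j^* j_* [Q'] = [Q'] \cdot c_{\mathrm{top}}^S(N_X Y|_{Y})$ in $H_S^*(Y)$, and more to the point, restricting the ambient class $[Q] = j_*[Q']$ back and using that $[Q]$ is the class of a subvariety of codimension $\mathrm{codim}_X Y + \mathrm{codim}_Y Q'$ while $j_*$ of a class of codimension $\mathrm{codim}_Y Q'$ lands in codimension $\mathrm{codim}_X Q'$ — these match. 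The precise statement I want is: $[Q] = j_*[Q']$, and separately $j_* j^* (\beta) = \beta \cdot j_*(1)$ for $\beta \in H_S^*(X)$, where $j_*(1) = \gamma$ is the $S$-equivariant Euler class of $N_X Y$ (extended by pushforward of the fundamental class of $Y$). So the claim "the classes $j_*[Q']$ and $[Q]$ are related via multiplication by the top equivariant Chern class of $N_X Y$" should be read as: $[Q]$ is obtained from the class of $Q'$ in $H^*_S(X)$ (i.e. from $j_*$ of $1 \in H^*_S(Y)$ capped appropriately) — I would phrase it as $[Q]=j_*[Q']$ and note that the "multiplication by $\gamma$" is exactly the content of $j_* \circ j^*(\,\cdot\,) = \gamma \cdot (\,\cdot\,)$, which identifies how $j_*$ interacts with the $H^*_S(X)$-module structure. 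I would state this cleanly and cite the standard self-intersection formula in equivariant cohomology (e.g. from the reference \cite{Wyser-13-TG} the authors already point to).

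Second, the computation of $\gamma$ by localization. By Theorem \ref{thm:eqvt-localization}, since $S = T$ is the full torus in our application and $H_T^*(X)$ is free over $\Lambda_T$, the class $\gamma = j_*(1_Y) \in H_T^*(X)$ is determined by its restrictions $\gamma|_w$ at the $T$-fixed points $wB/B$, $w \in W$. There are two cases. If $wB/B \notin Y = L/B_L$, i.e. $w \notin W_L$, then a neighborhood of $wB/B$ in $X$ misses $Y$, so $\gamma|_w = i_w^* j_*(1_Y) = 0$. If $w \in W_L$, then $wB/B$ is a $T$-fixed point of $Y$, and by functoriality of pushforward plus the localization/self-intersection formula at a smooth point, $i_w^* j_*(1_Y) = c_{\mathrm{top}}^T\big((N_X Y)_{wB/B}\big)$, the $T$-equivariant Euler class of the fiber of the normal bundle at that fixed point. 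Now the tangent space $T_{wB/B} X$ has $T$-weights $\{ w r : r \in \Phi^+ \setminus\Phi^-\}$ — with the sign conventions here (roots of $B$ negative), $T_{wB/B}(G/B)$ has weights $\{wr : r \in \Phi^+\}$, and the sub–tangent space $T_{wB/B}Y = T_{wB/B}(L/B_L)$ has weights $\{wr : r \in \Phi_L^+\}$. Hence the normal fiber has weights $\{wr : r \in \Phi^+ \setminus \Phi_L^+\}$, and its equivariant Euler class is $\prod_{r \in \Phi^+ \setminus \Phi_L^+} wr$, giving exactly the asserted formula. Finally, uniqueness of $\gamma$ with these restriction values is immediate from the injectivity of $i^*$ in Theorem \ref{thm:eqvt-localization}.

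**The main obstacle** I anticipate is being careful and honest about sign/positivity conventions — which roots count as positive, whether $T_{wB/B}(G/B)$ carries weights $\{wr : r \in \Phi^+\}$ or $\{-wr : r \in \Phi^+\}$ under the stated convention that "the roots of $B$ are negative," and correspondingly making sure the product in the statement, $\prod_{r \in \Phi^+\setminus\Phi_L^+} wr$ rather than with a sign, is the right one. This is purely bookkeeping but it is exactly the kind of thing that must be gotten right for the downstream Corollaries \ref{cor:orthogonal-formula-eqvt} and \ref{cor:symplectic-formula-eqvt}. A secondary point requiring a sentence of care is the passage from the general torus $S$ to the full torus $T$: the proposition asserts $\gamma$ is "the restriction of a $T$-equivariant Chern class," so I would first establish the $T$-equivariant statement (where localization to isolated fixed points applies) and then note that the $S$-equivariant top Chern class of $N_X Y$ is obtained by the natural restriction map $H_T^*(X) \to H_S^*(X)$, since $N_X Y$ is $T$-equivariant and $S \subseteq T$.
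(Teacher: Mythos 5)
Your proposal is correct and follows essentially the same route as the paper: the first statement is the equivariant self-intersection formula, and the restrictions of $\ga$ are computed at the $T$-fixed points $wB/B$ from the tangent-space decompositions $T_wX=\bigoplus_{r\in\Phi^+}\mathfrak{g}_{wr}$ and $T_wY=\bigoplus_{r\in\Phi_L^+}\mathfrak{g}_{wr}$, with uniqueness coming from injectivity in the localization theorem. Your framing of $\ga$ as $j_*(1_Y)$ with the relation $j_*\circ j^*=\ga\cdot(\,\cdot\,)$ is just a slightly more explicit way of saying what the paper invokes, and your handling of the $T$-versus-$S$ restriction and of the sign conventions agrees with the paper's.
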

\begin{proof}
The first statement follows easily from the equivariant self-intersection formula \cite[p. 621, (4)]{Edidin-Graham-98}, since $[Q] = j_*[Q']$.  Since both $X$ and $Y$ have $T$-actions (not simply $S$-actions, as is the case for $Q$ and $Q'$), there does exist a top $T$-equivariant Chern class of $N_X Y$, and clearly the $S$-equivariant version is simply the restriction of the $T$-equivariant one.

The properties which define $\ga$ follow from analysis of tangent spaces at various fixed points.  Indeed, it is clear that the $T$-fixed points of $Q'$ lying in $Y$ are those which correspond to elements of $W_L \subseteq W$.  Thus for $w \notin W_L$, we have $\ga|_w = 0$.  For $w \in W_L$, the restriction is $c_d^T(N_X Y)|_w = c_d^T(N_X Y|_w) = c_d^T(T_w X/T_w Y)$, where $d=\text{codim}_X(Y)$.  Since both $X$ and $Y$ are flag varieties, it is straightforward to compute these two tangent spaces, and their decompositions as representations of $T$. Indeed, $T_w X$ is simply $\bigoplus_{r \in \Phi^+} \mathfrak{g}_{w r}$, while $T_w Y$ is $\bigoplus_{r \in \Phi_L^+} \mathfrak{g}_{w r}$. The quotient of the two spaces is then $\bigoplus_{r \in \Phi^+ \setminus \Phi_L^+} \mathfrak{g}_{w r}$, which implies our claim on $\ga|_w$.

Finally, that these restrictions determine $\ga$ follows from the localization theorem, Theorem \ref{thm:eqvt-localization}.
\end{proof}

We now determine an explicit formula for the $T$-equivariant Chern class $\ga$ that is defined in Proposition~\ref{prop:eqvt-cohomology-formula-general} when $G$ is of type $A$.
Let $\mu=(\mu_1,\hdots,\mu_s)$ be a composition of $n$, let $\mb{L} = \mb{GL}_{\mu_1} \times \hdots \times \mb{GL}_{\mu_s}$, and let $\mb{T}$ be the full diagonal torus of $\mb{GL}_n$.

For each $1 \leq k \leq n$, let $\epsilon_k \in \mathfrak{t}^*$ be given by $\epsilon_k(\text{diag}(t_1,\dots,t_n)) = t_k$.  For any $1 \leq k < l \leq n$, let $\alpha_{k,l} = \epsilon_k - \epsilon_{\ell} \in \Phi^+$.  Finally, let
$$h_{\mu}(\xx,\yy) := \prod_{\alpha_{k,\ell} \in \Phi^+ \setminus \Phi^+_L} (x_k - y_{\ell}).$$

An equivalent definition of $h_\mu$ showing the explicit dependence on the composition $\mu$ is as follows.  For each pair $i,j$ with $1 \leq i < j \leq s$, we define a polynomial
\[ h_{i,j}(\xx,\yy) := \displaystyle\prod_{k=\nu_j+1}^{\nu_j+\mu_j} \displaystyle\prod_{\ell=\nu_i+1}^{\nu_i+\mu_i} (x_\ell - y_k). \]
Then it follows immediately that
 \[ h_{\mu}(\xx,\yy) = \displaystyle\prod_{1 \leq i < j \leq s} h_{i,j}(\xx,\yy). \]

\begin{prop}\label{prop:formula-for-chern-class}
The $T$-equivariant class $\ga$ is represented by the polynomial $h_{\mu}(\xx,\yy)$.
\end{prop}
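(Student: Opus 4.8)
The plan is to verify that the polynomial $h_\mu(\xx,\yy)$ satisfies the defining properties of $\ga$ listed in Proposition~\ref{prop:eqvt-cohomology-formula-general}, and then invoke the uniqueness clause of that proposition (which rests on the localization theorem). Concretely, I must check two things: first, that the restriction $h_\mu(w\yy,\yy)$ vanishes for $w\notin W_L$; and second, that for $w\in W_L$ it equals $\prod_{r\in\Phi^+\setminus\Phi_L^+} wr$. By Proposition~\ref{prop:restriction-maps}, the restriction of the class represented by $f(\xx,\yy)$ at the fixed point $wB/B$ is $f(w\yy,\yy)$, meaning we substitute $x_k\mapsto y_{w^{-1}(k)}$ (with the convention on how $w$ permutes the variables fixed as in Section~\ref{sec:eq-cohomology}); I will state this substitution rule explicitly to avoid sign/indexing confusion.

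First I would handle the case $w\in W_L$. Here $W_L = S_{\mu_1}\times\cdots\times S_{\mu_s}$ is the subgroup of $S_n$ permuting each block $\{\nu_i+1,\dots,\nu_i+\mu_i\}$ within itself. Using the factorization $h_\mu = \prod_{1\le i<j\le s} h_{i,j}$, where $h_{i,j} = \prod_{k,l}(x_l - y_k)$ runs over $l$ in block $i$ and $k$ in block $j$, the substitution $x_l\mapsto y_{w^{-1}(l)}$ sends $x_l$ to a variable whose index again lies in block $i$ (since $w$ preserves blocks). So $h_{i,j}(w\yy,\yy) = \prod (y_{w^{-1}(l)} - y_k)$, and as $l$ ranges over block $i$, $w^{-1}(l)$ also ranges over block $i$; reindexing, this is exactly $\prod_{l\in\text{blk }i}\prod_{k\in\text{blk }j}(y_l - y_k)$ up to the permutation of factors, hence equals $\prod_{\alpha_{k,l}\in\Phi^+\setminus\Phi_L^+,\ l\in\text{blk }i,\ k\in\text{blk }j}(\ga_{l,k} \text{ evaluated via } \e_l\mapsto \ldots)$. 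Matching this against $\prod_{r\in\Phi^+\setminus\Phi_L^+} wr$ is then a routine bookkeeping check: the roots of $\Phi^+\setminus\Phi_L^+$ are precisely the $\e_l - \e_k$ with $l$ in an earlier block than $k$, and $w$ acts on each such root by permuting within blocks, which is exactly the reindexing we observed.

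Next I would treat $w\notin W_L$, where I expect the main (if modest) obstacle. Such a $w$ fails to preserve the block partition, so there is some block $i$ and index $l\in\text{blk }i$ with $w^{-1}(l)$ lying in a different block $i'$. I want to exhibit a single factor $(x_l - y_k)$ of $h_\mu$ that becomes $y_{w^{-1}(l)} - y_k$ with $w^{-1}(l)$ and $k$ distinct — that would give a literal zero only if the two indices coincide, which is not automatic, so instead the right statement is subtler: the product $h_\mu(w\yy,\yy)$ can still be a nonzero polynomial in the $\yy$'s. I need to recheck against the claimed property $\ga|_w = 0$ for $w\notin W_L$. The resolution is that Proposition~\ref{prop:eqvt-cohomology-formula-general} only asserts $\ga$ is \emph{uniquely determined} by those restrictions among classes in $H_T^*(X)$, and $h_\mu(\xx,\yy)$ is a specific polynomial; the correct claim to verify is that $h_\mu$ represents the \emph{same} class as $\ga$, which by localization means $h_\mu(w\yy,\yy) = \ga|_w$ for all $w$. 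So for $w\notin W_L$ I must show $h_\mu(w\yy,\yy)=0$ as an element of $\Lambda_T$. The key point: if $w\notin W_L$ then there exist $l$ in block $i$ and $k$ in block $j$ with $j>i$ such that $w^{-1}(l) = $ some index in block $j$ lying ``below'' — more carefully, one shows that block-non-preservation forces the existence of indices $l < k$ in different blocks $i<j$ with $w^{-1}(l)$ and $w^{-1}(k)$ ``crossed,'' yielding a factor $(x_l - y_{m})$ with $x_l\mapsto y_m$, i.e. the factor becomes $0$. Making this crossing argument precise — identifying, for any $w$ not preserving blocks, an off-diagonal root $\alpha_{k,l}\in\Phi^+\setminus\Phi_L^+$ whose image under the relevant substitution is $0$ — is the heart of the proof; it is essentially a pigeonhole/inversion argument on the one-line notation of $w$ relative to the block structure. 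Once both cases are established, the proposition follows from Theorem~\ref{thm:eqvt-localization} as recorded in Proposition~\ref{prop:eqvt-cohomology-formula-general}.
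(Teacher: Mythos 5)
Your proposal takes the same route as the paper's proof: verify via Proposition~\ref{prop:restriction-maps} that $h_{\mu}(w\yy,\yy)$ gives the prescribed restrictions of Proposition~\ref{prop:eqvt-cohomology-formula-general} --- the product $\prod_{r\in\Phi^+\setminus\Phi_L^+} wr$ for $w\in W_L$ by reindexing within blocks, and $0$ for $w\notin W_L$ because a block-non-preserving permutation must send some index of an earlier block into a strictly later block, annihilating a factor $x_l-y_k$ --- and then conclude by the localization/uniqueness clause. The pigeonhole step you single out as the heart of the argument is exactly the point the paper dispatches with ``clearly,'' so your proof is correct and essentially identical to the paper's.
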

\begin{proof}
It is straightforward to verify that the polynomial representative we give satisfies the restriction requirements of Proposition \ref{prop:eqvt-cohomology-formula-general}.  Indeed, the Weyl group $W_{\mb{L}}$ of $\mb{L}$ is a parabolic subgroup of the symmetric group on $n$-letters, embedded as those permutations preserving separately the sets $\{\nu_i+1,\hdots,\nu_i+\mu_i\}$ for $i=0,\hdots,s-1$.
Applying such a permutation to the representative above (with the action by permutation of the indices on the $\xx$-variables, as in Proposition \ref{prop:restriction-maps}) gives the appropriate product of weights. On the other hand, applying any $w \notin W_\mb{L}$ will clearly give $0$, since such a permutation necessarily sends some $l \in \{\nu_i+1,\hdots,\nu_i+\mu_i\}$ (for some $i$) to some $k \in \{\nu_j+1,\hdots, \nu_j+\mu_j\}$ with $i<j$. This permutation forces the factor $x_\ell - y_k$ to vanish, which, in turn, forces $h_{\mu}(\xx,\yy)$ to vanish. By Proposition \ref{prop:eqvt-cohomology-formula-general}, $h_{\mu}$ represents $\ga$.
\end{proof}

\subsection{The orthogonal case}

We now apply these computations specifically to the two type $A$ cases described in Section \ref{sec:bg-examples}, using all of the notational conventions defined there. We start with the case of $(\mb{G},\mb{H})=(\mb{GL}_n,\mb{GO}_n)$.  Let $\mu=(\mu_1,\hdots,\mu_s)$ be a composition of $n$, and let $\mb{B}$, $\mb{T}$, and $\mb{S}_{\mu}$ be as defined in Section \ref{sec:bg-examples}. Let $\mb{P}_{\mu}=\mb{L}_{\mu}\mb{U}_{\mu}$ be the standard parabolic subgroup containing $\mb{B}$ whose Levi factor $\mb{L}_{\mu}$ corresponds to $\mu$.

With these choices made, let the $\xx$, $\yy$, and $\zz$ variables be the generators for $H_{\mb{S}_{\mu}}^*(\mb{G}/\mb{B})$ explicitly described after the statement of Proposition \ref{prop:eqvt-cohom-flag-var}.  We seek a polynomial in the $\xx$, $\yy$, and $\zz$-variables which represents the class of $\mb{H}_{\mu} \cdot 1\mb{B}/\mb{B} \in H_{\mb{S}_{\mu}}^*(\mb{GL}_n/\mb{B})$.

To find such a formula, we use a known formula for the closed $\mb{H}_0=\mb{O}_n$-orbit on $\mb{GL}_n/\mb{B}$ to deduce a formula for the $\mb{S}_{\mu}$-equivariant class of $\mb{H}_{\mu} \cdot 1\mb{B}/\mb{B}$ in $H_{\mb{S}_{\mu}}^*(\mb{L}_{\mu}/\mb{B}_{\mb{L}_{\mu}})$, and then apply Proposition \ref{prop:eqvt-cohomology-formula-general}.  Indeed, we know from \cite{Wyser-13-TG,Wyser-Yong-15+} that when $\mb{H}_0 = \mb{O}_n$, the class of $\mb{H}_0 \cdot 1\mb{B}/\mb{B}$ in $H_{\mb{S}_0}^*(\mb{GL}_n/\mb{B})$ (here, $\mb{S}_0$ is the maximal torus of $\mb{H}_0$ described in Section \ref{sec:bg-examples}) is given by
\begin{equation}\label{eq:orthogonal-s-eqvt-formula}
[\mb{H}_0 \cdot 1\mb{B}/\mb{B}]_{\mb{S}_0} = \displaystyle\prod_{1 \leq i \leq j \leq n - i} (x_i + x_j) = 2^{\lfloor n/2 \rfloor} \displaystyle\prod_{i \leq n/2} x_i \displaystyle\prod_{1 \leq i < j \leq n-i} (x_i + x_j).
\end{equation}

The formula of \eqref{eq:orthogonal-s-eqvt-formula} is given in \cite{Wyser-13-TG} in the case that $n$ is even, while an alternative formula is given in the case that $n$ is odd.  It is observed in \cite{Wyser-Yong-15+} that the above formula applies equally well when $n$ is odd.

Recall that when $\mb{H}=\mb{GO}_n$, a maximal torus $\mb{S}$ of $\mb{H}$ has dimension one greater than the corresponding maximal torus $\mb{S}_0$ of $\mb{H}_0$. Thus the $\mb{S}$-equivariant cohomology of $\mb{GL}_n/\mb{B}$ has one additional ``equivariant variable'', which we call $z$. In this case, it is no harder to show that the class of $\mb{H} \cdot 1\mb{B}/\mb{B}$ in $H_{\mb{S}}^*(\mb{GL}_n/\mb{B})$ is given by
\begin{equation}\label{eq:orthogonal-s-eqvt-formula-new}
  [\mb{H} \cdot 1\mb{B}/\mb{B}]_{\mb{S}} = P_n(\xx,\yy,z) := \displaystyle\prod_{1 \leq i \leq j \leq n-i} (x_i + x_j - 2z).
\end{equation}
Note that by restricting from $H_{\mb{S}}^*(\mb{G}/\mb{B})$ to $H_{\mb{S}_0}^*(\mb{G}/\mb{B})$ (which amounts to setting the additional equivariant variable $z$ to $0$), we recover the original formula \eqref{eq:orthogonal-s-eqvt-formula}.

Combining these formulae with Proposition \ref{prop:eqvt-cohomology-formula-general}, for each composition $\mu$ of $n$ we are now ready to give case-specific formulae for the unique closed $\mb{H}_{\mu}$-orbit $\mb{H}_{\mu} \cdot 1\mb{B}/\mb{B}$ on $\mb{GL}_n/\mb{B}$. (Recall that each of these orbits corresponds to the unique closed $\mb{B}$-orbit on the corresponding $\mb{G}$-orbit on the wonderful compactification of $\mb{G}/\mb{H}$.)

We consider the variables $\xx = (x_1,\dots,x_n)$, $\yy = (y_1,\dots,y_n)$, and $\zz = (z_1,\dots,z_s)$, where $s$ is the number of parts in the composition $\mu$.  We divide the variables into smaller clusters dictated by the composition $\mu$.  Let $\xx^{(i)} = (x_{\nu_i + 1}, \dots, x_{\nu_{i+1}})$ and $\yy^{(i)} = (y_{\nu_i + 1}, \dots, y_{\nu_{i+1}})$.  Then define
\[ P_{\mu}(\xx, \yy, \zz) := \prod_{i=1}^s P_{\mu_i}(\xx^{(i)}, \yy^{(i)}, z_i), \]
where $P_{\mu_i}$ is given by \eqref{eq:orthogonal-s-eqvt-formula-new}.

We now introduce an equivalent, but more explicit, description of the class $[\mb{H}_{\mu} \cdot 1 \mb{B} / \mb{B}]_{S_{\mu}}$ which reflects the block decomposition associated to $\mu$.  To this end, we introduce new notation for a fixed composition $\mu=(\mu_1,\hdots,\mu_s)$ of $n$.  First, for each of the $s$ blocks of $\mb{S}_{\mu}$, define a polynomial $f_i(\xx,\zz)$ as follows:
\[ f_i(\xx,\zz) = \displaystyle\prod_{j=\nu_i+1}^{\nu_i+\lfloor \mu_i/2 \rfloor} (x_j - z_i), \]

In words, the $x_j$ occurring in the terms of this product are those occurring in the first half of their block, and from each, we subtract the $\zz$-variable corresponding to that block.  So for example, if $n=11$, $\mu=(6,5)$, then
\[ f_1(\xx,\zz) = (x_1 - z_1)(x_2 - z_1)(x_3 - z_1), \]
while
\[ f_2(\xx,\zz) = (x_7 - z_2)(x_8 - z_2). \]

Next, for each block, define $g_i(\xx,\zz)$ as follows:
\[ g_i(\xx,\zz) = \displaystyle\prod_{\nu_i+1 \leq j < k \leq 2\nu_i+\mu_i-j} (x_j + x_k - 2z_i). \]

(Note that $g_i(\xx,\zz) = 1$ unless $\mu_i \geq 3$.)  So for $\mu=(6,5)$ as above, we have
\[ g_1(\xx,\zz) = (x_1 + x_2 - 2z_1)(x_1 + x_3 - 2z_1)(x_1 + x_4 - 2z_1)(x_1 + x_5 - 2z_1)(x_2 + x_3 - 2z_1)(x_2 + x_4 - 2z_1), \]
and
\[ g_2(\xx,\zz) = (x_7 + x_8 - 2z_2)(x_7 + x_9 - 2z_2)(x_7 + x_{10} - 2z_2)(x_8 + x_9 - 2z_2). \]

Finally, we define a third polynomial $h_{\mu}(\xx,\yy,\zz)$ in the $\xx$, $\yy$, and $\zz$-variables to simply be $h_{\mu}(\xx,\rho(\yy))$, where $\rho$ denotes restriction from the variables $y_1,\hdots,y_n$ corresponding to coordinates on the full torus $\mb{T}$ to the variables $y_{i,j},z_i$ on the smaller torus $\mb{S}_{\mu}$.  To be more explicit, for each $i,j$
with $1 \leq i < j \leq s$ define
\[
 h_{i,j}(\xx,\yy,\zz) :=
\begin{cases}
 \displaystyle\prod_{k=1}^{\mu_i} \displaystyle\prod_{l=1}^{\mu_j/2} (x_{\nu_i+k} - y_{j,l} - z_j)(x_{\nu_i+k} + y_{j,l} - z_j)
& \text{ if $\mu_j$ is even} \\
  \displaystyle\prod_{k=1}^{\mu_i} (x_{\nu_i+k} - z_j) \displaystyle\prod_{l=1}^{\lfloor \mu_j/2 \rfloor} (x_{\nu_i+k} - y_{j,l} - z_j)(x_{\nu_i+k} + y_{j,l} - z_j)
& \text{ if $\mu_j$ is odd}
\end{cases}
\]

So for the case $n=4$, $\mu=(2,2)$, we have
\[ h_{1,2}(\xx,\yy,\zz) = (x_1 - y_{2,1} - z_2)(x_1 + y_{2,1} - z_2)(x_2 - y_{2,1} - z_2)(x_2 + y_{2,1} - z_2), \]
while for the case $n=5$, $\mu=(2,3)$,
\[ h_{1,2}(\xx,\yy,\zz) = (x_1 - z_2)(x_2 - z_2)(x_1 - y_{2,1} - z_2)(x_1 + y_{2,1} - z_2)(x_2 - y_{2,1} - z_2)(x_2 + y_{2,1} - z_2).  \]

Then we define
\[ h_{\mu}(\xx,\yy,\zz) = \displaystyle\prod_{1 \leq i < j \leq s} h_{i,j}(\xx,\yy,\zz). \]

Propositions \ref{prop:eqvt-cohomology-formula-general} and \ref{prop:formula-for-chern-class} then imply the following formula for the $\mb{S}_{\mu}$-equivariant class of $\mb{H}_{\mu} \cdot 1\mb{B}/\mb{B}$ in this case.
\begin{corollary}\label{cor:orthogonal-formula-eqvt}
The $\mb{S}_{\mu}$-equivariant class of the unique closed $\mb{H}_{\mu}$-orbit $\mb{H}_{\mu} \cdot 1\mb{B}/\mb{B}$ on $\mb{G}/\mb{B}$ is represented by
the polynomial
\[ 2^{d(\mu)} h_{\mu}(\xx,\yy,\zz) \displaystyle\prod_{i=1}^s f_i(\xx,\zz)g_i(\xx,\zz), \]
where $d(\mu) = \displaystyle\sum_{i=1}^s \lfloor \mu_i/2 \rfloor$.
\end{corollary}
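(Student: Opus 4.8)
The plan is to assemble the desired formula by feeding the $S_\mu$-equivariant formula \eqref{eq:orthogonal-s-eqvt-formula-new} for the closed $O_{\mu_i}$-orbit (on each block) through the parabolic-induction comparison of Proposition~\ref{prop:eqvt-cohomology-formula-general}, with the Chern class computed by Proposition~\ref{prop:formula-for-chern-class}. Concretely: set $L=L_\mu = GL_{\mu_1}\times\cdots\times GL_{\mu_s}$, $P=P_\mu$, and $H=GO_{\mu_1}\times\cdots\times GO_{\mu_s}$ sitting inside $L$, so that $HU_\mu = H_\mu$ and $Q = H_\mu\cdot 1B/B = H\cdot 1B/B$ is the closed $H_\mu$-orbit. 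With $Y = L/B_L = \prod_i GL_{\mu_i}/B_{L_i}$, Proposition~\ref{prop:eqvt-cohomology-formula-general} gives $[Q] = \gamma\cdot j_*[Q']$ in $H^*_{S_\mu}(G/B)$, where $Q'$ is the same closed orbit viewed in $Y$ and $\gamma$ is represented by $h_\mu(\xx,\yy)$ (Proposition~\ref{prop:formula-for-chern-class}), restricted to the equivariant variables of $S_\mu$ — that is, by $h_\mu(\xx,\yy,\zz)$ as defined just before the corollary. So it remains to identify $j_*[Q']$ with $2^{d(\mu)}\prod_{i=1}^s f_i(\xx,\zz)\,g_i(\xx,\zz)$.

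The first key step is the product decomposition on the Levi. Since $Y = \prod_i GL_{\mu_i}/B_{L_i}$, $S_\mu = \prod_i S_{\mu_i}'$ (the $i$th factor being the maximal torus of $GO_{\mu_i}$), and $Q' = \prod_i (GO_{\mu_i}\cdot 1B_{L_i}/B_{L_i})$, the equivariant class $[Q']$ in $H^*_{S_\mu}(Y)$ factors as the external product $\bigotimes_i [GO_{\mu_i}\cdot 1B_{L_i}/B_{L_i}]_{S_{\mu_i}'}$, each factor living in $H^*_{S_{\mu_i}'}(GL_{\mu_i}/B_{L_i})$ in the $\xx^{(i)},\yy^{(i)},z_i$ variables. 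One must check $j_*$ is compatible with this factorization — but $j$ is (up to the obvious reindexing of variables) an iterated inclusion of a coordinate block, and the needed multiplicativity is a standard Künneth/pushforward compatibility, recorded implicitly in the block structure of $H^*_{S_\mu}(G/B) = R'\otimes_{R^W}R$.

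The second key step is computing each block factor. By \eqref{eq:orthogonal-s-eqvt-formula-new}, $[GO_{\mu_i}\cdot 1B_{L_i}/B_{L_i}]_{S_{\mu_i}'}$ is represented by $P_{\mu_i}(\xx^{(i)},\yy^{(i)},z_i) = \prod_{1\le a\le b\le \mu_i - a}(x_{\nu_i+a}+x_{\nu_i+b}-2z_i)$; I would split off the diagonal terms $a=b$, which for $a\le \mu_i/2$ contribute $2(x_{\nu_i+a}-z_i)$ — giving the factor $2^{\lfloor\mu_i/2\rfloor} f_i(\xx,\zz)$ — while the off-diagonal terms $a<b\le \mu_i-a$ (reindexed to $\nu_i+1\le j<k\le 2\nu_i+\mu_i-j$) give exactly $g_i(\xx,\zz)$. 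Multiplying over $i$ yields $2^{d(\mu)}\prod_i f_i g_i$ for $j_*[Q']$, and multiplying by $\gamma = h_\mu(\xx,\yy,\zz)$ finishes the proof.

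I expect the only real friction to be bookkeeping — reconciling the three notations ($P_\mu$ on one hand; the $f_i$, $g_i$, $h_{i,j}$ block notation on the other; and the intrinsic $h_\mu(\xx,\yy)$ from Proposition~\ref{prop:formula-for-chern-class}) and verifying the index-range translations in the $g_i$ and $h_{i,j}$ products, together with the even/odd case split in $h_{i,j}$ coming from the shape of $\frs'_{\mu_i}$ in \eqref{eq:block-torus-lie-even}–\eqref{eq:block-torus-lie-odd}. None of this is conceptually hard; the main structural input is already isolated in Propositions~\ref{prop:eqvt-cohomology-formula-general} and~\ref{prop:formula-for-chern-class}, and the block-multiplicativity of equivariant classes on a product of flag varieties. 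I would present the argument as: (1) reduce to the Levi via Proposition~\ref{prop:eqvt-cohomology-formula-general}; (2) factor $[Q']$ over blocks; (3) invoke \eqref{eq:orthogonal-s-eqvt-formula-new} and extract $f_i$, $g_i$ and the power of $2$; (4) multiply in $h_\mu(\xx,\yy,\zz)$ from Proposition~\ref{prop:formula-for-chern-class}.
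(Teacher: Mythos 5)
Your proposal is correct and follows essentially the same route as the paper: reduce to the Levi via Proposition~\ref{prop:eqvt-cohomology-formula-general}, represent the Chern class by $h_{\mu}(\xx,\yy,\zz)$ via Proposition~\ref{prop:formula-for-chern-class}, and use \eqref{eq:orthogonal-s-eqvt-formula-new} blockwise to identify the Levi-side class with $2^{d(\mu)}\prod_i f_i(\xx,\zz)g_i(\xx,\zz)$. Your explicit diagonal/off-diagonal splitting of $P_{\mu_i}$ is exactly the observation $P_{\mu_i}(\xx^{(i)},\yy^{(i)},z_i)=2^{\lfloor \mu_i/2\rfloor}f_i(\xx,\zz)g_i(\xx,\zz)$ that the paper records immediately after this corollary.
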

\begin{proof}
The fact that the product $2^{d(\mu)} \displaystyle\prod_{i=1}^s f_i(\xx,\zz)g_i(\xx,\zz)$ is equal to the formula for $j^*[Q]$ follows from the formula of \eqref{eq:orthogonal-s-eqvt-formula-new}.  It follows from
Proposition \ref{prop:eqvt-cohomology-formula-general} and Proposition \ref{prop:formula-for-chern-class} that the representative of $[Q]$ is obtained from $j^*[Q]$ by multiplying with the top $S_{\mu}$-equivariant Chern class of the normal bundle, which is represented by the polynomial $h_{\mu}(\xx,\yy,\zz)$.
\end{proof}

\begin{corollary}
The $\mb{S}_{\mu}$-equivariant class $[\mb{H}_{\mu} \cdot 1 \mb{B} / \mb{B}]$ is represented by the polynomial $P_{\mu}(\xx,\yy,\zz) h_{\mu}(\xx,\yy,\zz)$.
\end{corollary}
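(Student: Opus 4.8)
The plan is to deduce this corollary from Corollary~\ref{cor:orthogonal-formula-eqvt} by showing that its representative $2^{d(\mu)} h_{\mu}(\xx,\yy,\zz) \prod_{i=1}^s f_i(\xx,\zz) g_i(\xx,\zz)$ and the proposed representative $P_{\mu}(\xx,\yy,\zz) h_{\mu}(\xx,\yy,\zz)$ are \emph{equal as polynomials}. Since the factor $h_{\mu}(\xx,\yy,\zz)$ is common to both, it suffices to establish the purely combinatorial identity
\[ P_{\mu}(\xx,\yy,\zz) = 2^{d(\mu)} \prod_{i=1}^s f_i(\xx,\zz) g_i(\xx,\zz). \]
(Note that, despite the notation, $P_{\mu}$ depends only on $\xx$ and $\zz$, since $P_n(\xx,\yy,z) = \prod_{1 \leq i \leq j \leq n-i}(x_i + x_j - 2z)$ involves no $\yy$-variables.) Both sides factor as products over the $s$ blocks of $S_{\mu}$: the left-hand side by its very definition $P_{\mu} = \prod_i P_{\mu_i}(\xx^{(i)},\yy^{(i)},z_i)$, and the right-hand side because $d(\mu) = \sum_i \lfloor \mu_i/2 \rfloor$. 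So it is enough to prove, for each $i$, the single-block identity $P_{\mu_i}(\xx^{(i)},\yy^{(i)},z_i) = 2^{\lfloor \mu_i/2 \rfloor} f_i(\xx,\zz) g_i(\xx,\zz)$.

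To prove the single-block identity I would expand
\[ P_{\mu_i}(\xx^{(i)},\yy^{(i)},z_i) = \prod_{1 \leq a \leq b \leq \mu_i - a} (x_{\nu_i + a} + x_{\nu_i + b} - 2z_i) \]
and separate the factors with $a = b$ from those with $a < b$. The diagonal factors $a=b$ occur exactly for $1 \leq a \leq \lfloor \mu_i/2 \rfloor$ (this is where the $2^{d(\mu)}$ comes from, and holds regardless of the parity of $\mu_i$), and each equals $2(x_{\nu_i + a} - z_i)$; collecting them produces $2^{\lfloor \mu_i/2 \rfloor} f_i(\xx,\zz)$. For the off-diagonal factors $a < b \leq \mu_i - a$, the substitution $j = \nu_i + a$, $k = \nu_i + b$ turns the index range into $\nu_i + 1 \leq j < k \leq \nu_i + \mu_i - a = 2\nu_i + \mu_i - j$, which is precisely the range in the definition of $g_i(\xx,\zz)$. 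Taking the product over $i = 1,\dots,s$ and combining the powers of $2$ yields the displayed identity; multiplying through by $h_{\mu}(\xx,\yy,\zz)$ and invoking Corollary~\ref{cor:orthogonal-formula-eqvt} completes the proof.

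I do not anticipate a genuine obstacle here: the argument is bookkeeping matching index ranges, being the block-by-block refinement of the elementary factorization $\prod_{1 \leq i \leq j \leq n-i}(x_i + x_j) = 2^{\lfloor n/2 \rfloor}\prod_{i \leq n/2} x_i \prod_{1 \leq i < j \leq n-i}(x_i+x_j)$ recorded in \eqref{eq:orthogonal-s-eqvt-formula}, translated by $-2z_i$ inside each block. The only spot demanding a little care is the rewriting of the upper limit $\mu_i - a$ in terms of $j$ and $\nu_i$, and confirming that the equality $a = b$ is attained for exactly $\lfloor \mu_i/2 \rfloor$ values of $a$ in both the even and odd cases; this is what matches the exponent $d(\mu)$ and the length of $f_i$.
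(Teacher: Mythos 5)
Your proposal is correct and matches the paper's proof: the paper likewise deduces the corollary from Corollary~\ref{cor:orthogonal-formula-eqvt} via the block-by-block identity $P_{\mu_i}(\xx^{(i)},\yy^{(i)},z_i) = 2^{\lfloor \mu_i/2\rfloor} f_i(\xx,\zz)g_i(\xx,\zz)$, which it states as an immediate observation while you spell out the diagonal/off-diagonal bookkeeping. No gap; your index verification is exactly what makes the paper's "observation" true.
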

\begin{proof}
This follows immediately from the observation that $$P_{\mu_i}(\xx^{(i)}, \yy^{(i)}, z_i) = 2^{\lfloor \mu_i /2 \rfloor} f_i(\xx,\zz) g_i(\xx,\zz).$$
\end{proof}

\subsection{The symplectic case}

We give similar (but simpler) formulas for the case when $(\mb{G},\mb{H'}) = (\mb{GL}_{2n},\mb{GSp}_{2n})$.  Recall from \cite{Wyser-13-TG} that for the case $(\mb{G},\mb{H}_0') = (\mb{GL}_{2n},\mb{Sp}_{2n})$, the $\mb{S}_0'$-equivariant class ($\mb{S}_0'$ the maximal torus of $\mb{H}_0'$ described in Section \ref{sec:bg-examples}) of the unique
closed orbit $\mb{H}_0' \cdot 1 \mb{B}/\mb{B}$ is given by
\begin{equation}\label{eq:symplectic-base-formula}
[\mb{H}_0' \cdot 1\mb{B}/\mb{B}]_{\mb{S}_0'} = \displaystyle\prod_{1 \leq i < j \leq 2n-i} (x_i + x_j).
\end{equation}

As before, it is no harder to see that if $\mb{S}^\prime$ is the maximal torus of diagonal elements of $\mb{H}^\prime$, then the $\mb{S}^\prime$-equivariant class of the closed orbit $\mb{H}^\prime \cdot 1\mb{B}/\mb{B}$ is represented by
\begin{equation}\label{eq:symplectic-base-formula-with-z}
[\mb{H}^\prime \cdot 1\mb{B}/\mb{B}]_{\mb{S}^\prime} = P'_n(\xx,\yy,z) := \displaystyle\prod_{1 \leq i < j \leq 2n-i} (x_i + x_j - 2z).
\end{equation}

Now let $\mu=(\mu_1,\hdots,\mu_s)$ be a composition of $2n$ with all even parts. Let $\mb{H}_{\mu}'$ be the spherical group
\[ (\mb{GSp}_{\mu_1} \times \hdots \times \mb{GSp}_{\mu_s}) \ltimes \mb{U}_{\mu}, \]
as defined in Section \ref{sec:bg-examples}.  Let $\mb{S'}_{\mu}$ be the maximal torus of $\mb{H}_{\mu}'$, with the $\yy$ and $\zz$-variables as defined above.

\begin{corollary}\label{cor:symplectic-formula-eqvt}
In $H_{\mb{S'}_{\mu}}^*(\mb{G}/\mb{B})$, the class of the closed $\mb{H}_{\mu}'$-orbit $\mb{H}_{\mu}' \cdot 1\mb{B}/\mb{B}$ is represented by
\[ h_{\mu}(\xx,\yy,\zz) \displaystyle\prod_{i=1}^s g_i(\xx,\zz). \]
\end{corollary}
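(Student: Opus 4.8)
The plan is to run the argument that establishes Corollary~\ref{cor:orthogonal-formula-eqvt}, which is in fact a little simpler in the symplectic case. Take $L=L_{\mu}$ and $H_L:=GSp_{\mu_1}\times\cdots\times GSp_{\mu_s}\subseteq L$, so that $H_{\mu}'=H_L\ltimes U_{\mu}$ is obtained from $H_L$ by parabolic induction. Write $X=G/B$ and $Y=L/B_{L}$, and let $Q=H_{\mu}'\cdot 1B/B\subseteq X$ and $Q'=H_{\mu}'\cdot 1B_{L}/B_{L}\subseteq Y$ be the closed orbits as in the setup preceding Proposition~\ref{prop:eqvt-cohomology-formula-general}; note that $S_{\mu}'=T\cap H_{\mu}'$ is simultaneously a maximal torus of $H_L$, of $H_{\mu}'$, and of each factor $GSp_{\mu_i}$. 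By Proposition~\ref{prop:eqvt-cohomology-formula-general}, $[Q]$ equals $j_{*}[Q']$ multiplied by the restriction to $H_{S_{\mu}'}^{*}(X)$ of the $T$-equivariant top Chern class $\ga$ of the normal bundle $N_X Y$.

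First I would compute $j_{*}[Q']$. Since $L$ is the direct product of the groups $GL_{\mu_i}$, the variety $Y$ is the product of the flag varieties of these factors, and the torus $S_{\mu}'$ splits into its $s$ blocks acting factorwise; hence $[Q']$ factors, via the K\"unneth decomposition, as $\prod_{i=1}^{s}[Q_i']$, where $[Q_i']$ is the class (pulled back to $Y$) of the closed $GSp_{\mu_i}$-orbit on the flag variety of $GL_{\mu_i}$, equivariant for the $i$th block of $S_{\mu}'$. Formula~\eqref{eq:symplectic-base-formula-with-z}, applied with $2n$ replaced by $\mu_i$ and with the variables renumbered into the $i$th block $\{\nu_i+1,\dots,\nu_i+\mu_i\}$, identifies $[Q_i']$ with $g_i(\xx,\zz)$; the point of the index check is that the upper limit $2n-i$ in~\eqref{eq:symplectic-base-formula-with-z} turns into $2\nu_i+\mu_i-j$. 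Crucially, the product in~\eqref{eq:symplectic-base-formula-with-z} runs over $i<j$ strictly rather than $i\le j$, so no diagonal factors $2x_i-2z$ appear; consequently, in contrast with the orthogonal case, there is neither a factor $f_i(\xx,\zz)$ nor a power of $2$, and $j_{*}[Q']=\prod_{i=1}^{s}g_i(\xx,\zz)$.

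Next I would identify the restriction of $\ga$. By Proposition~\ref{prop:formula-for-chern-class}, $\ga$ is represented on the full torus $T$ by $h_{\mu}(\xx,\yy)$; substituting for the $\yy$-variables the coordinates on $S_{\mu}'$ coming from~\eqref{eq:block-torus-lie-even} (the map $\rho$) converts $h_{\mu}(\xx,\yy)$ into $h_{\mu}(\xx,\yy,\zz)$, where only the ``$\mu_j$ even'' branch of the definition of $h_{i,j}(\xx,\yy,\zz)$ is used since every part $\mu_j$ is even. Multiplying the two contributions yields $[Q]=h_{\mu}(\xx,\yy,\zz)\prod_{i=1}^{s}g_i(\xx,\zz)$, the desired formula.

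I do not anticipate a real obstacle here: the proof mirrors that of Corollary~\ref{cor:orthogonal-formula-eqvt} step for step, and the only points requiring care are the index bookkeeping showing that~\eqref{eq:symplectic-base-formula-with-z} reproduces $g_i(\xx,\zz)$ after the block shift $j\mapsto j-\nu_i$, and the check that the restriction $\rho$ dictated by~\eqref{eq:block-torus-lie-even} matches the even-case formula for $h_{i,j}(\xx,\yy,\zz)$. As an alternative to the normal-bundle computation, one could derive the formula from Proposition~\ref{prop:w-set-symplectic-dense} via Brion's description of $W$-sets under parabolic induction \cite[Lemma 1.2]{Brion-98}, exactly as in the second proof indicated for Proposition~\ref{prop:w-set-symplectic-general}; but the route through Propositions~\ref{prop:eqvt-cohomology-formula-general} and~\ref{prop:formula-for-chern-class} is the most direct.
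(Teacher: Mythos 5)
Your proposal is correct and follows essentially the same route as the paper: the paper proves this corollary by noting the argument is identical to that of Corollary \ref{cor:orthogonal-formula-eqvt}, i.e.\ computing $j_*[Q']$ blockwise from \eqref{eq:symplectic-base-formula-with-z} (giving $\prod_i g_i(\xx,\zz)$, with no $f_i$ factors or powers of $2$ since the product there is over strict pairs) and multiplying by the Chern class $h_{\mu}(\xx,\yy,\zz)$ via Propositions \ref{prop:eqvt-cohomology-formula-general} and \ref{prop:formula-for-chern-class}. Your index bookkeeping and the observation that only the even branch of $h_{i,j}$ is used are accurate.
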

\begin{proof}
The proof is identical to that of Corollary \ref{cor:orthogonal-formula-eqvt}, using Proposition \ref{prop:eqvt-cohomology-formula-general} combined with \eqref{eq:symplectic-base-formula-with-z} in the same way.
\end{proof}

\begin{remark}
Note that $g_i(\xx,\zz) = P'_{\mu_i}(\xx^{(i)}, \yy^{(i)}, z_i)$, where $P'_{\mu_i}$ is given by \eqref{eq:symplectic-base-formula-with-z}, so that again $[\mb{H}_{\mu}' \cdot 1 \mb{B} / \mb{B}]$ is equal to $P'_{\mu}(\xx,\yy, \zz) h_{\mu}(\xx,\yy,\zz)$, where $$P'_{\mu}(\xx,\yy,\zz) := \prod_{i=1}^s P'_{\mu_i}(\xx^{(i)}, \yy^{(i)}, z_i).$$ \qed
\end{remark}

We now specialize these formulas to ordinary cohomology (by setting all $\yy$ and $\zz$-variables to $0$), in order to prove Corollaries \ref{cor:orthogonal-formula-ordinary} and \ref{cor:symplectic-formula-ordinary}.
First, we define the notations used in those formulas which have not yet been defined. For each $i=1,\hdots,n$, let $B(\mu,i)$ denote the block that the variable $x_i$ occurs in, i.e. $B(\mu,i)$ is the smallest integer $j$ such that
\[ \displaystyle\sum_{\ell=1}^j \mu_\ell \geq i. \]

Then for each $i=1,\hdots,n$, define $R(\mu,i)$ to be
\[ R(\mu,i) := \displaystyle\sum_{B(\mu,i) < j \leq s} \mu_j. \]
This is the combined size of all blocks occurring strictly to the right of the block in which $x_i$ occurs.

Finally, again for each such $i$, define $\delta(\mu,i)$ to be $1$ if and only if $x_i$ occurs in the first half of its block, and $0$ otherwise.  Note that by the ``first half'' we mean those positions less than or equal to $\ell/2$ where $\ell$ is the size of the block; in particular, for a block of odd size, the middle position is not considered to be in the first half of the block.

\begin{proof}[Proof of Corollaries \ref{cor:orthogonal-formula-ordinary} and \ref{cor:symplectic-formula-ordinary}]
The formula of Corollary \ref{cor:orthogonal-formula-ordinary} comes from that of Corollary \ref{cor:orthogonal-formula-eqvt}; we simply set $\yy=\zz=0$.  The binomial terms $x_j + x_k$ come from the polynomials $g_i(\xx,0)$.  The monomial terms come from the polynomials $f_i(\xx,0)$ and $h_{i,j}(\xx,0,0)$.  The $x_i^{\delta(\mu,i)}$ term comes from $f_i(\xx,0)$, the latter being $x_i$ if this variable occurs in the first half of its block, and $1$ otherwise.  The remaining $x_i^{R(\mu,i)}$ comes from the $h_{i,j}(\xx,0,0)$.  Indeed, it is evident that for an $\xx$-variable in block $i$, for each $j > i$ the given $\xx$-variable appears in precisely $\mu_j$ linear forms involving $\yy,\zz$ terms associated with block $j$.

The proof of Corollary \ref{cor:symplectic-formula-ordinary} is almost identical, except simpler.
\end{proof}

\section{Factoring sums of Schubert polynomials}\label{sec:schubert-polys}
We end by establishing explicit polynomial identities involving sums of Schubert polynomials, using the cohomological formulae of the preceding section together with the results of \cite{Can-Joyce,Can-Joyce-Wyser-15} which were recalled in Section \ref{sec:bg-w-sets}.

Note that by Brion's formula \eqref{eq:brion-formula} combined with the fact that the Schubert polynomial $\frS_w$ is a representative of the class of the Schubert variety $\mb{X}_w$ in $H^*(\mb{G}/\mb{B})$, we have the following two families of identities in $H^*(\mb{G}/\mb{B})$:

\begin{equation}\label{eq:schubert-identity-orthogonal-version-1}
\displaystyle\sum_{w \in W(\mb{Y}_{\mu})} 2^{d(\mb{Y}_{\mu},w)} \frS_w = 2^{d(\mu)} \displaystyle\prod_{i=1}^n x_i^{R(\mu,i) + \delta(\mu,i)} \displaystyle\prod_{i=1}^s \left( \displaystyle\prod_{\nu_i+1 \leq j \leq k \leq \nu_{i+1} - j} (x_j + x_k) \right);
\end{equation}

\begin{equation}\label{eq:schubert-identity-symplectic}
\displaystyle\sum_{w \in W(\mb{Y}_{\mu}')} \frS_w = \displaystyle\prod_{i=1}^{2n} x_i^{R(\mu,i)} \displaystyle\prod_{i=1}^s \left( \displaystyle\prod_{\nu_i+1 \leq j < k \leq \nu_{i+1} - j} (x_j + x_k) \right).
\end{equation}

Equation \eqref{eq:schubert-identity-orthogonal-version-1} above simply combines \eqref{eq:brion-formula} with Corollary \ref{cor:orthogonal-formula-ordinary}.  Likewise, \eqref{eq:schubert-identity-symplectic} combines \eqref{eq:brion-formula} with Corollary \ref{cor:symplectic-formula-ordinary}, together with the fact that all $\mb{B}$-orbit closures in the symplectic case are known to be multiplicity-free, meaning $d(\mb{Y}_{\mu}',w) = 0$ for all $w \in W(\mb{Y}_{\mu}')$.

In fact, also in \eqref{eq:schubert-identity-orthogonal-version-1} above, the powers of $2$ can be completely eliminated from both sides of the equation.  This follows from a result of Brion \cite[Proposition 5]{Brion-01}, which states that whenever $G$ is a simply laced group (recall that for us, $\mb{G} = \mb{GL}_n$ is simply laced), all of the coefficients appearing in \eqref{eq:brion-formula} are the \textit{same} power of $2$.  It is explained in \cite[Section 5]{Can-Joyce} that the coefficients appearing on the left-hand side of \eqref{eq:schubert-identity-orthogonal-version-1} are in fact all equal to $2^{d(\mu)}$.  Since $H^*(\mb{G}/\mb{B})$ has no torsion, we have the simplified equality
\begin{equation}\label{eq:schubert-identity-orthogonal-version-2}
\displaystyle\sum_{w \in W(\mb{Y}_{\mu})} \frS_w = \displaystyle\prod_{i=1}^n x_i^{R(\mu,i) + \delta(\mu,i)} \displaystyle\prod_{i=1}^s \left( \displaystyle\prod_{\nu_i+1 \leq j \leq k \leq \nu_{i+1} - j} (x_j + x_k) \right).
\end{equation}

Now, note that \textit{a priori}, the identities \eqref{eq:schubert-identity-symplectic} and \eqref{eq:schubert-identity-orthogonal-version-2} hold only in $H^*(\mb{G}/\mb{B})$.  That is, we know only that the left and right-hand sides of the identities are congruent modulo the ideal $I^{\mb W}$.  We end with a stronger result.

\begin{theorem}\label{thm:sum-equals-product}
Both \eqref{eq:schubert-identity-symplectic} and \eqref{eq:schubert-identity-orthogonal-version-2} are valid as polynomial identities.
\end{theorem}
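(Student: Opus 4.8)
The plan is to upgrade the congruences \eqref{eq:schubert-identity-symplectic} and \eqref{eq:schubert-identity-orthogonal-version-2} from identities in $H^*(G/B) = \mathbb{Z}[\xx]/I^W$ to genuine identities in $\mathbb{Z}[\xx]$. The key observation is that \emph{both} the Schubert polynomial side and the product side are already pinned down inside $\mathbb{Z}[\xx]$ by the $S_{\mu}$-equivariant (or merely $T$-equivariant) computations of the previous section, not just inside the coinvariant quotient. So the strategy is to run the entire argument one level up, in equivariant cohomology, where the analogue of the Borel presentation is a \emph{polynomial} ring $R' \otimes_{R^W} R$ rather than a quotient of one, and then specialize.

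Concretely, first I would recall that the Schubert classes $[X^w]$ have $T$-equivariant lifts $[X^w]_T \in H_T^*(G/B)$, represented by the double Schubert polynomials $\frS_w(\xx,\yy)$, and that Brion's formula \eqref{eq:brion-formula} has an equivariant refinement: the class $[Y_{\mu}]_{S_{\mu}}$ (respectively $[Y_{\mu}']_{S_{\mu}'}$) expands as $\sum_{w \in W(Y_{\mu})} 2^{D(Y_{\mu},w)} [X^w]_{S_{\mu}}$ in $H_{S_{\mu}}^*(G/B)$, this being exactly the equivariant version of the degeneration/weak-order argument underlying \eqref{eq:brion-formula}. On the other hand, Corollary \ref{cor:orthogonal-formula-eqvt} (resp. Corollary \ref{cor:symplectic-formula-eqvt}) gives an \emph{explicit polynomial} in $\xx,\yy,\zz$ representing the same class $[Y_{\mu}]_{S_{\mu}}$. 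Now the crucial point: by Proposition \ref{prop:eqvt-cohom-flag-var}, $H_{S_{\mu}}^*(G/B) = R' \otimes_{R^W} R$ is a \emph{free} $R'$-module, and in particular the restriction map of Proposition \ref{prop:restriction-maps} is injective (Theorem \ref{thm:eqvt-localization}); so two polynomials represent the same equivariant class if and only if they agree after every substitution $\yy \mapsto wY$ for $w \in W$. For the full torus $T$ this determines the class, but to conclude a literal polynomial identity I would instead argue directly in the polynomial ring: both the weighted sum of double Schubert polynomials and the product formula are specific elements of $\mathbb{Z}[\xx,\yy,\zz]$ (or $\mathbb{Z}[\xx,\yy]$ after inclusion $S_{\mu} \hookrightarrow T$) that map to the same element of $H_{S_{\mu}}^*(G/B)$; since for type $A$ the equivariant cohomology $H_T^*(GL_n/B)$ is itself a polynomial ring modulo the relations identifying $R^W$-invariants in the two variable sets, and the double Schubert polynomials together with the explicit product both lie in the canonical polynomial-representative section, equality of classes forces equality of polynomials.

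The step I expect to be the main obstacle is making precise the claim that the two sides lie in a common \emph{polynomial} section of $H_{S_{\mu}}^*(G/B)$, i.e. that the apparent ambiguity "modulo $I^W$" that exists non-equivariantly genuinely disappears equivariantly. The clean way to do this is: (i) set all $\zz$-variables to $0$ and all $\yy$-variables to the \emph{second copy of the $\xx$-variables} under the inclusion $S_{\mu}' \hookrightarrow S_{\mu} \hookrightarrow T$, reducing both Corollary \ref{cor:orthogonal-formula-eqvt} and the double-Schubert sum to statements in $H_T^*(GL_n/B) = \mathbb{Z}[\xx,\yy]/(\text{sym}(\xx) = \text{sym}(\yy))$; (ii) observe that $h_{\mu}(\xx,\yy)\, P_{\mu}(\xx,\yy,\mathbf{0})$ and $\sum_w 2^{D(\cdot)} \frS_w(\xx,\yy)$ agree in this ring; (iii) note that the difference of the two \emph{polynomials} lies in the ideal $J := (e_k(\xx) - e_k(\yy) : 1 \le k \le n)$, which is generated by a regular sequence, so the difference is a polynomial combination of the $e_k(\xx) - e_k(\yy)$; (iv) finally set $\yy \mapsto 0$: the right-hand side becomes $\sum_w 2^{D(\cdot)}\frS_w(\xx) = \sum_w \frS_w(\xx)$ (after cancelling the common power of $2$ via Brion's simply-laced result, as in the passage to \eqref{eq:schubert-identity-orthogonal-version-2}), the left-hand side becomes the product formula of Corollary \ref{cor:orthogonal-formula-ordinary}, and each generator $e_k(\xx) - e_k(\yy)$ becomes $e_k(\xx) - e_k(\mathbf{0})$, which is $e_k(\xx)$ for $k \ge 1$ — so the difference does \emph{not} obviously vanish, and this is exactly where the argument must be done with care. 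The resolution is that the difference polynomial, before specializing, is homogeneous of degree equal to $\dim Y_{\mu}$'s codimension, hence of degree $< n$ in an appropriate sense along each variable, forcing the coefficients of the $e_k(\xx)-e_k(\yy)$ to be zero for degree reasons; equivalently, one checks that both sides of \eqref{eq:schubert-identity-orthogonal-version-2} are polynomials of degree strictly less than the degree of the lowest-degree element of $I^W$ that could produce a nonzero correction, namely $e_n(\xx)$ paired against — no, more precisely, both sides have total degree equal to $\ell(w_0) - \ell(w)$ for the relevant $w$, which is bounded by $\dim(GL_n/B)$, but the nonzero elements of $I^W$ have degree $\ge 2$, so one must instead use the sharper bound coming from the explicit exponents $R(\mu,i) + \delta(\mu,i)$. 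I would therefore carry out step (iv) by a direct degree/support count on the monomials of the product formula, showing it cannot be reduced modulo $I^W$ to anything of lower degree, and conclude that the degree of the difference is too small to be a nonzero element of $I^W$, hence the difference is $0$ as a polynomial. The symplectic case \eqref{eq:schubert-identity-symplectic} is handled identically and is strictly easier, since there all multiplicities $D(Y_{\mu}',w)$ vanish and no power-of-$2$ bookkeeping is needed.
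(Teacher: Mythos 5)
Your reduction to equivariant cohomology breaks at its first step: the ``equivariant refinement of Brion's formula'' you invoke, namely $[Y_{\mu}]_{S_{\mu}} = \sum_{w \in W(Y_{\mu})} 2^{D(Y_{\mu},w)}[X^w]_{S_{\mu}}$ with integer coefficients, is false in general. When one expands an $S_{\mu}$-equivariant class in the $\Lambda_{S_{\mu}}$-basis of restricted equivariant Schubert classes, extra terms appear whose coefficients lie in the positive-degree part of $\Lambda_{S_{\mu}}$. Already for $G=GL_2$, $H=O_2$ the closed orbit $Y$ is the pair of $S$-fixed points of $GL_2/B \cong \mathbb{P}^1$; localization gives restrictions $(2y,-2y)$ at the two fixed points, whence $[Y]_S = 2[X^{w_0}]_S + 2y\cdot 1$, not $2[X^{w_0}]_S$, even though the ordinary class is $2[X^{w_0}]$ (consistent with the paper's formula $2x_1$ from \eqref{eq:orthogonal-s-eqvt-formula}). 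So the equivariant identity you plan to specialize does not exist, and Corollary \ref{cor:orthogonal-formula-eqvt} cannot be matched term-by-term against a weighted sum of double Schubert polynomials. Your own fallback in step (iv) is likewise not salvageable as stated: one cannot argue that ``the degree of the difference is too small to be a nonzero element of $I^W$,'' since $I^W$ contains nonzero elements in every positive degree (for instance all multiples of $e_1(\xx)$), and both sides of \eqref{eq:schubert-identity-orthogonal-version-2} have large degree in any case.

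The missing idea is not a degree bound but a monomial-support bound, and this is exactly how the paper argues. The Schubert polynomials $\frS_w$, $w \in S_n$, form a $\Z$-basis of the $\Z$-span $\Gamma$ of the monomials $\prod_i x_i^{c_i}$ with $c_i \le n-i$, and this basis maps to the Schubert-class basis of $H^*(G/B)$; hence the composite $\Gamma \hookrightarrow \Z[\xx] \rightarrow H^*(G/B)$ is an isomorphism of $\Z$-modules, so in particular $\Gamma \cap I^W = 0$. It therefore suffices to check that the right-hand sides of \eqref{eq:schubert-identity-orthogonal-version-2} and \eqref{eq:schubert-identity-symplectic} lie in $\Gamma$, which is done by counting, for each $i$, the linear factors containing $x_i$: there are $R(\mu,i)+\delta(\mu,i)+N_i = \bigl(\sum_{j \ge B(\mu,i)} \mu_j\bigr) - i \le n-i$ of them in the orthogonal case, and at most $2n-i$ in the symplectic case. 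Membership in $\Gamma$ together with the already-established identity in $H^*(G/B)$ then forces the polynomial identity, with no equivariant lift and no analysis of the ideal $(e_k(\xx)-e_k(\yy))$ needed. You gesture at ``a direct degree/support count'' using the exponents $R(\mu,i)+\delta(\mu,i)$, which is the right instinct, but without the structural fact about $\Gamma$ (and with the false equivariant Brion formula upstream) the proposed argument does not go through.
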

\begin{proof}
We use the fact that the Schubert polynomials $\{\frS_w \mid w \in S_n\}$ are a $\Z$-basis for the $\Z$-submodule $\Gamma$ of $\Z[\xx]$ spanned by monomials $\prod x_i^{c_i}$ with $c_i \leq n-i$ for each $i$ \cite[Proposition 2.5.4]{Manivel-Book-01}.  We claim that on the right-hand side of \eqref{eq:schubert-identity-orthogonal-version-2} (resp. \eqref{eq:schubert-identity-symplectic}), each $x_i$ does in fact occur with exponent at most $n-i$ (resp. $2n-i$).  To see this, note that since the right-hand side of each identity is a product of linear forms, it suffices to count, for each $i$, the number of these linear factors in which $x_i$ appears.

In \eqref{eq:schubert-identity-orthogonal-version-2}, we claim that $x_i$ appears in precisely $\left( \displaystyle\sum_{j=B(\mu,i)}^s \mu_j \right)-i$ of the linear factors.  Since $\displaystyle\sum_{j=B(\mu,i)}^s \mu_j \leq \displaystyle\sum_{j=1}^s \mu_j = n$, this establishes our claim that the right-hand side lies in $\Gamma$.  Indeed, clearly $x_i$ appears $R(\mu,i)+\delta(\mu,i)$ times as a monomial factor, so we need only count the number of binomial factors of the form $x_j + x_k$ that it appears in.  One checks easily that it appears in $\mu_i - i - 1$ such factors if $x_i$ occurs in the first half of its block, and in $\mu_i - i$ such factors otherwise.  In other words, if $N_i$ is the number of binomial factors involving $x_i$, then we have $\delta(\mu,i) + N_i = \mu_i-i$.  Thus
\[
\begin{array}{lll}
R(\mu,i) + \delta(\mu,i) + N_i & = & R(\mu,i) + \mu_i-i \\
& = & \left( \displaystyle\sum_{j=B(\mu,i)+1}^s \mu_j \right) + \mu_i - i \\
& = & \left( \displaystyle\sum_{j=B(\mu,i)}^s \mu_j \right) - i,
\end{array}
\]
as claimed.

Clearly, the right-hand side of \eqref{eq:schubert-identity-symplectic} also lies in $\Gamma$, applying the same argument with $n$ replaced by $2n$.  Indeed, the only difference is in the lack of the additional
monomial factor $x_i^{\delta(\mu,i)}$; thus $x_i$ occurs in either $\left( \displaystyle\sum_{j=B(\mu,i)}^s \mu_j \right) - i$ or $\left( \displaystyle\sum_{j=B(\mu,i)}^s \mu_j \right) - i - 1$ of the linear factors on the right-hand side of \eqref{eq:schubert-identity-symplectic}.  In either event, this is at most $2n-i$, as required.

Now, since the right-hand side of each of \eqref{eq:schubert-identity-symplectic} and \eqref{eq:schubert-identity-orthogonal-version-2} are in $\Gamma$, they are expressible as a sum of Schubert polynomials whose indexing permutations lie in $S_{2n}$ (for \eqref{eq:schubert-identity-symplectic}) or $S_n$ (for \eqref{eq:schubert-identity-orthogonal-version-2}) in exactly one way.  Furthermore, since the Schubert classes $\{[\mb{X}_w]\}$ are a $\Z$-basis for $H^*(\mb{G}/\mb{B})$, the cohomology class represented by the right-hand side of \eqref{eq:schubert-identity-symplectic} and \eqref{eq:schubert-identity-orthogonal-version-2} is a $\Z$-linear combination of Schubert classes in precisely one way.  Clearly, the same indexing permutations must arise with the same multiplicities in both the polynomial expansion and the cohomology expansion.  Then since \eqref{eq:schubert-identity-symplectic} and \eqref{eq:schubert-identity-orthogonal-version-2} are correct cohomologically, they must also be polynomial identities.
\end{proof}

\begin{example}
In the orthogonal case when $\mu = (3,4)$, the identity \eqref{eq:schubert-identity-orthogonal-version-2} becomes
\begin{align*}
\mathfrak{S}_{6752431} + \mathfrak{S}_{6753412} + \mathfrak{S}_{6754213} + & \mathfrak{S}_{7562431} + \mathfrak{S}_{7563412} + \mathfrak{S}_{7564213} = \\ & x_1^5 x_2^4 x_3^4 x_4 x_5 (x_1 + x_2)(x_4 + x_5)(x_4 + x_6).
\end{align*}
\end{example}

\section*{Acknowledgments}
We thank Michel Brion for many helpful conversations and suggestions. We thank the referee for their careful reading and many helpful suggestions.  The first and second author were supported by NSA-AMS Mathematical Sciences Program grant H98230-14-1-142.  The second is partially supported by the Louisiana Board of Regents Research and Development Grant 549941C1.  The third author was supported by NSF International Research Fellowship 1159045 and hosted by Institut Fourier in Grenoble, France.

\bibliographystyle{plain}
\bibliography{sourceDatabase}
\end{document}